\newtheorem{theorem}{Theorem}[section]
\newtheorem{lemma}[theorem]{Lemma}
\newtheorem{proposition}[theorem]{Proposition}
\theoremstyle{definition}
\newtheorem{definition}[theorem]{Definition}
\DeclareMathOperator*{\supp}{supp}
\DeclareMathOperator{\sgn}{sgn}
\newcommand{\noi}{\noindent}
\newcommand{\Z}{\mathbb{Z}}
\newcommand{\R}{\mathbb{R}}
\newcommand{\C}{\mathbb{C}}
\newcommand{\Q}{\mathbb{Q}}
\newcommand{\NB}{\mathbb{N}}
\newcommand{\weakstar}{\mathrel{\ensurestackMath{\stackon[1pt]{\rightharpoonup}{\scriptstyle\ast}}}}
\newcommand{\F}{\mathcal{F}}
\newcommand{\al}{\alpha}
\newcommand{\dl}{\delta}
\newcommand{\eps}{\varepsilon}
\newcommand{\s}{\sigma}
\newcommand{\ft}{\widehat}
\newcommand{\dx}{\partial_x}
\newcommand{\qtq}[1]{\quad\text{#1}\quad}
\newcommand{\jb}[1]
{\langle #1 \rangle}
\renewcommand{\l}{\ell}
\newcommand{\les}{\lesssim}
\newcommand{\ges}{\gtrsim}
\numberwithin{equation}{section}
\numberwithin{theorem}{section}
\begin{document}

\title[Bounded solutions of KdV]{Bounded solutions of KdV: uniqueness\\and the loss of almost periodicity}

\author{Andreia Chapouto}
\address{
Department of Mathematics\\
University of California\\Los Angeles\\CA 90095\\USA}
\email{chapouto@math.ucla.edu}

\author{Rowan Killip}
\address{
Department of Mathematics\\
University of California\\Los Angeles\\CA 90095\\USA}
\email{killip@math.ucla.edu}

\author{Monica Vi\c{s}an}
\address{
Department of Mathematics\\
University of California\\Los Angeles\\CA 90095\\USA}
\email{visan@math.ucla.edu}

\begin{abstract}
We address two pressing questions in the theory of the Korteweg--de Vries (KdV) equation. First, we show the uniqueness of solutions to KdV that are merely bounded, without any further decay, regularity, periodicity, or almost periodicity assumptions. The second question, emphasized by Deift \cite{Deift08,Deift17}, regards whether almost periodic initial data leads to almost periodic solutions to KdV. Building on the new observation that this is false for the Airy equation, we construct an example of almost periodic initial data whose KdV evolution remains bounded, but fails to be almost periodic at a later time.  Our uniqueness result ensures that the solution constructed is the unique development of this initial data.
\end{abstract}


\maketitle

\section{Introduction}

We study solutions to the Korteweg--de Vries equation (KdV) 
\begin{equation} \label{kdv}\tag{KdV}
	\tfrac{d}{dt} q = - q''' + 3 (q^2)'  ,
\end{equation} 
which describes the evolution of a real-valued function defined on the line $\R$.  Here primes denote spatial derivatives.

We are interested in studying solutions to \eqref{kdv} that are merely bounded, without further regularity or decay assumptions.  This class includes a multitude of different spatial profiles of enduring interest, including step-like solutions \cite{MR3548255,MR0139371,MR0748367,MR1263128,
MR3071444, GP74,MR4356987,MR2439485,MR3229496}, quasi- and almost periodic solutions \cite{BDGL18,DG16,Deift08,Deift17,Egorova,EVY,Tsugawa12}, as well as soliton gases \cite{MR3261206,DZZ16,MR4410140,MR4259375,MR1098341,ZDZ16}.

Our first objective is to show that such bounded solutions are uniquely determined by their initial data, without auxiliary conditions.  This is known as \emph{unconditional uniqueness}.  This term was coined by Kato in the paper \cite{Kato95}, which studied nonlinear Schr\"odinger equations in $H^s(\R)$ spaces.  The unconditionality of Kato's results meant precisely that he could prove that the solutions were unique in the space $C_t H^s_x$.  By comparison, Kato explains, solutions that are constructed via contraction mapping in Strichartz spaces, are only guaranteed to be unique amongst competitors that also have finite Strichartz norm.  

For initial data with more interesting spatial asymptotics, solutions are constructed, perforce, under the presumption that they will maintain the same spatial asymptotics.  Indeed, even for periodic initial data, solutions are constructed within the class of functions with the same spatial period and unconditional uniqueness has traditionally been interpreted in this sense too.  Is it really necessary to enforce this restriction? Or does it follow directly from \eqref{kdv}?  Similarly, on the basis of physical intuition, we expect initial data decaying at infinity  (say, in the sense of belonging to an $H^s(\R)$ space) to yield only solutions that likewise decay at infinity.  But can we prove this?  What assumptions are necessary?

Without examples of wild behaviour, the questions of the previous paragraph may seem foolish.  However, such examples do exist!  Both \cite{arXiv:0503366} and \cite{MR0988885} construct nonzero solutions to \eqref{kdv} with zero initial data.  In the case of \cite{MR0988885}, the solutions are infinitely smooth and defined on a narrow spacetime region around the set $t\equiv 0$.  Their jumping-off point for this construction is the existence of a globally defined smooth nonzero solution to the Airy equation, $\partial_t q = - q'''$, that vanishes for all $t\leq0$.

By contrast, the solutions constructed in \cite{arXiv:0503366} are very irregular, namely, $C_tH^s$ with $s<0$; correspondingly, the definition of solution employed in that paper is rather subtle.  These solutions are also periodic in space.  As the zero function is periodic with any period one chooses, Christ's solutions provide an example where the period of the solution does not coincide with that of the initial data.  They may also be regarded as solutions whose initial data are rapidly decreasing, but then suddenly, are not.

Before turning to our principal uniqueness result, namely, Theorem~\ref{th:unique}, we must first pause to make the notion of a bounded solution precise. 

Given an open interval $I\subseteq \R$, a bounded measurable function $q:I\times\R\to\R$ is said to be a \emph{distributional solution} to \eqref{kdv} if
\begin{equation}\label{dsol}
\iint \bigl[ \partial_t \phi(t,x) + \phi'''(t,x) \bigr] q(t,x)\,dx\,dt = 
		3 \iint \phi'(t,x) q(t,x)^2 \, dt \,dx, 
\end{equation}
for every $\phi \in C^\infty_c(I\times\R)$.  Evidently, such a solution can be modified on any spacetime null set without affecting its status as a distributional solution; this includes any fixed-time slice!

To remove this ambiguity, we may demand that $q(t,x)$ agrees with its spacetime Lebesgue values (where they exist); this only affects the values on a spacetime null set.  With this change, \eqref{dsol} guarantees that
\begin{equation*}
\int_{s_1}^{s_2}\!\!\int q(t,x) \psi'''(x) - 3q(t,x)^2\psi'(x) \,dx\,dt = \int [q(s_2,x)-q(s_1,x)]\psi(x)\,dx
\end{equation*} 
for every choice of $\psi\in C^\infty_c(\R)$ and all $s_1<s_2$ belonging to the time interval $I$.  This in turn demonstrates that
\begin{equation}\label{w*cts}
t\mapsto \int \psi(x) q(t,x) \,dx \quad\text{is continuous for all $\psi\in C^\infty_c(\R)$.}
\end{equation}
As $q$ is already assumed to be bounded, \eqref{w*cts} is equivalent to the statement that $t\mapsto q(t,x)$ is continuous into $L^\infty$ endowed with the weak-$*$ topology.  This line of reasoning justifies our preferred notion of solution:

\begin{definition}\label{def:solution}
Given an open interval $I\subseteq \R$, a bounded measurable function $q:I\times\R\to\R$ is a \emph{bounded solution to \eqref{kdv}} if it is a distributional solution and weak-$*$ continuous, which is to say \eqref{dsol} and \eqref{w*cts} hold.
\end{definition}

One should resist the temptation to adopt the norm topology on $L^\infty$ here.  First, it would lead to a more restrictive notion of solution and so weaken our uniqueness claim.  Secondly, it is also rather unnatural for PDE problems.  For example, the time-dependent characteristic function $\chi_{[t,\infty)}$ (which solves the simple transport equation $\partial_t q + q' =0$) is weak-$*$ continuous, but not norm continuous.

For the consideration of bounded solutions, we need only consider the weak\nobreakdash-$*$ topology on closed balls in $L^\infty$.  These topological spaces are completely metrizable and compact, which is a very comfortable setting in which to do analysis.

Continuity ensures a meaningful connection between the solution and its initial data.  The question of unconditional uniqueness is whether each initial data admits at most one continuous development.  This we answer in the affirmative for bounded solutions to \eqref{kdv}:

\begin{theorem}\label{th:unique}
Let $q_1$ and $q_2$ be bounded solutions to \eqref{kdv}, both defined on some open interval $I\subseteq \R$.  If $q_1(t_0) = q_2(t_0)$ as elements of $L^\infty(\R)$ for a single $t_0\in I$, then $q_1(t) = q_2(t)$ for all $t\in I$.
\end{theorem}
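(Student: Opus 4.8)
The plan is to linearize the problem at the level of the Lax operators $L_j(t):=-\partial_x^2+q_j(t)$, trading the nonlinear evolution of $q_j$ for the linear (commutator) evolution of the associated resolvents, and to tame the lack of spatial decay through the exponential off-diagonal decay of the resolvent kernels.

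First I would fix $\kappa>0$ with $\kappa^2>2\sup_{t}\{\|q_1(t)\|_{L^\infty},\|q_2(t)\|_{L^\infty}\}$, so that $L_j(t)+\kappa^2\ge\tfrac12\kappa^2$ and the resolvent $R_j(\kappa,t):=(L_j(t)+\kappa^2)^{-1}$ is a bounded operator on $L^2(\R)$ with a continuous integral kernel $G_j(x,y;\kappa,t)$ obeying a bound of the form $|G_j(x,y)|\lesssim\kappa^{-1}e^{-\kappa|x-y|/2}$. The weak-$*$ continuity \eqref{w*cts} together with the second resolvent identity shows that $t\mapsto R_j(\kappa,t)$ is continuous, and the hypothesis $q_1(t_0)=q_2(t_0)$ gives $R_1(\kappa,t_0)=R_2(\kappa,t_0)$. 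Moreover, the diagonal Green's function $g_j(x;\kappa,t):=G_j(x,x;\kappa,t)$ determines $q_j$: it satisfies the algebraic and differential identities
\begin{equation*}
-2g_jg_j''+(g_j')^2+4(q_j+\kappa^2)g_j^2=1, \qquad g_j'''=4(q_j+\kappa^2)g_j'+2q_j'g_j,
\end{equation*}
equivalently $g_j(x;\kappa,t)=\tfrac{1}{2\kappa}-\tfrac{q_j(x,t)}{4\kappa^3}+O(\kappa^{-5})$ as $\kappa\to\infty$. Thus it suffices to prove $R_1(\kappa,t)=R_2(\kappa,t)$ for all $t\in I$ and all large $\kappa$.

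Next I would record the Lax evolution. Writing $P_j:=-4\partial_x^3+6q_j\partial_x+3q_j'$, the equation \eqref{dsol} implies, in a suitable weak sense, that $\partial_t R_j=[P_j,R_j]$. For the difference $A(t):=R_1(\kappa,t)-R_2(\kappa,t)$ this yields $A(t_0)=0$ and
\begin{equation*}
\partial_t A=[P_1,A]+[P_1-P_2,R_2], \qquad P_1-P_2=3\bigl(w\,\partial_x+\partial_x\,w\bigr),
\end{equation*}
where $w:=q_1-q_2$. The transport term $[P_1,A]$ is (formally) skew-adjoint and drops out of the natural energy, leaving the second term as a source. Using $A=-R_1\,w\,R_2$ to reinstate a resolvent on either side of the rough factor $w$, together with the off-diagonal decay of the kernels, I would derive a Gronwall inequality for a localized norm of the kernel of $A$ (for instance $\sup_{x}\int|K(x,y)|\,e^{\kappa|x-y|/4}\,dy$, with $K$ the kernel of $A$), the constant depending only on $\kappa$ and $\sup_t\|q_j\|_{L^\infty}$. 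Running this forward and backward from $t_0$ on each compact subinterval of $I$ forces $A\equiv0$, and hence $q_1(t)=q_2(t)$ in $L^\infty(\R)$ for all $t\in I$.

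The main obstacle is closing the Gronwall estimate at the level of merely bounded potentials. The source term $[P_1-P_2,R_2]$ contains the first-order operator $3(w\partial_x+\partial_x w)$, and since $w$ is only bounded, $\partial_x w$ is a genuine distribution; a single resolvent does not smooth enough to absorb it. The integrable structure is what saves the argument: the difference of resolvents is \emph{doubly} smoothed, $A=-R_1 w R_2$, so by symmetrizing the commutator one can always arrange a resolvent on both sides of the rough factor, while the perfect-derivative form $3(q^2)'$ of the nonlinearity is precisely what makes the dangerous contributions recombine into controllable commutators. Verifying that these manipulations are legitimate for distributional, non-decaying solutions—rather than merely formal—is the crux of the proof.
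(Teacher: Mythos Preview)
Your strategy is close in spirit to the paper's, but there is a real gap at the point where you say ``the transport term $[P_1,A]$ is (formally) skew-adjoint and drops out of the natural energy.'' Skew-adjointness of $P_1$ only annihilates the commutator in a unitarily invariant norm such as the Hilbert--Schmidt norm $\mathrm{tr}(A^*A)$. For non-decaying potentials $q_j$, the kernel of $A$ does not decay and $A$ is not Hilbert--Schmidt, which is precisely why you are forced to pass to a \emph{localized} norm like $\sup_x\int|K(x,y)|e^{\kappa|x-y|/4}\,dy$. But then the commutator $[P_1,A]$ does \emph{not} drop out: the localizing weight does not commute with $P_1$, and the resulting commutator with the weight contains the third-order part $-4\partial_x^3$, which produces exactly the derivative loss you flagged in the source term. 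So the two difficulties you separated --- the transport term and the rough source --- are in fact the same difficulty, and your proposal does not yet contain a mechanism for handling it.

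The paper closes this gap not at the operator level but by singling out a specific \emph{scalar} density, namely $\dfrac{(g_1-g_2)^2}{2g_1g_2}$ built from the diagonal Green's functions, and computing its time evolution explicitly. The resulting identity (their \eqref{gronwall}) has the miraculous feature that every uncontrolled term carries at least one factor of $\psi'$, i.e.\ one derivative lands on the localizing weight rather than on $g_j$ or $q_j$. This is what allows the loss of derivative to be absorbed: with $\psi_R(x)=\mathrm{sech}(x/R)$ one trades the bad term against an $O(R^{-1/2})$ error via Cauchy--Schwarz with parameter $\eps=R^{-1/2}$, runs Gronwall at fixed $R$, and then sends $R\to\infty$. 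The identity \eqref{q-diff} then recovers $q_1=q_2$ from $g_1=g_2$. Your observation that $A=-R_1wR_2$ provides double smoothing is correct and is morally why the paper's density works, but the actual closure requires finding this particular density and verifying the long algebraic computation behind \eqref{gronwall}; nothing in your outline substitutes for that step.
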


We know of no prior unconditional well-posedness results for \eqref{kdv} under mere boundedness constraints, no matter how many space or time derivatives are assumed bounded.

The uniqueness questions that have received the most attention are those related to $C_t^{ }H^s_x$ solutions on the line $\R$ and on the circle $\R/\Z$.  (Working on the circle is equivalent to studying periodic solutions with an enforced period.)   After reviewing this, we will discuss recent results of \cite{BDGL18,MR4122653}, which provide the only other unconditional uniqueness results that we know of.  These consider certain classes of almost periodic initial data, motivated by questions posed by Deift \cite{Deift08,Deift17} on the spacetime almost periodicity of solutions to \eqref{kdv} with almost periodic initial data. This discussion will lead naturally to the second main contribution of this paper, namely, Theorem~\ref{th:deift}, which demonstrates the existence of solutions whose initial data is almost periodic but whose later evolution is not.

Early results on the well-posedness problem for \eqref{kdv} focused on initial data in $H^s$ spaces.  All employ the same uniqueness argument, which we will now explain.  For any pair of classical solutions $q_1$ and $q_2$ to \eqref{kdv}, we have
\begin{align}\label{micro ag}
\partial_t (q_1-q_2)^2 =& - \partial_x^3 (q_1-q_2)^2 + 3 \partial_x \bigl\{ (q_1'-q_2')^2 \bigr\} + 3 (q_1'+q_2')(q_1-q_2)^2 \notag\\
	& + 3 \partial_x \bigl\{  (q_1+q_2)(q_1-q_2)^2 \bigr\}  .
\end{align}
By integrating over the whole space, we find that
\begin{align}\label{macro ag}
\partial_t \int (q_1-q_2)^2\,dx \leq 3 \Bigl[\| q_1' \|_{L^\infty} + \| q_2' \|_{L^\infty} \Bigr] \int (q_1-q_2)^2\,dx .
\end{align}
This type of inequality allows one to deduce uniqueness via Gronwall's inequality.  In this case, it would require that both solutions belong to $C_t^{ }L^2_x$ and that $q_1'$ and $q_2'$ belong to $L^1_tL^\infty_x$.  Consequently, this argument shows unconditional uniqueness in $C_t H^s(\R)$ and $C_t H^s(\R/\Z)$ for any $s>3/2$.

The proof of Theorem~\ref{th:unique} will also, ultimately, employ a Gronwall-type argument.  The fundamental difficulty in such an argument is finding an effective notion of the `distance' between two solutions.  It is crucial that one can control the increment of the distance in terms of itself, as in \eqref{macro ag}.  This is a daunting task even for Schwartz-class solutions; the elegant simplicity of \eqref{micro ag} belies essential algebraic miracles.  Nevertheless, we have found another notion of distance with this miraculous property; this can be seen by setting $\psi\equiv 1$ and $\tilde F_1 \equiv \tilde F_2 \equiv 0$ in \eqref{gronwall}.  Moreover, the rate of exponential growth of this distance is controlled for solutions that are merely bounded. 

There is a second obstacle that we must also overcome: As we wish to treat solutions without spatial decay, we must adopt some localized notion of distance.  This is in direct conflict with the fact that we are considering a dispersive equation: high-frequency waves travel fast and so may lead to rapid inflation of the difference in localized norms.  The concomitant loss of derivatives is evident already from the second term in RHS\eqref{micro ag}.  This phenomenon likewise manifests in the second spacetime integral in RHS\eqref{gronwall}.  In Section~\ref{sec:3} we present a means of overcoming this loss by exploiting the fact that one derivative falls on the localizing weight.
 
It is now known that solutions of \eqref{kdv} are unconditionally unique in $C_t^{ } L^2_x$ both on the line and on the circle.  In the line case, this was shown by Zhou in \cite{MR1440304}.  The first observation is that $C_t^{ } L^2_x$ solutions automatically belong to certain $X^{s,b}$ spaces.  The Duhamel formula is then used to show that the $X^{s,b}$ norm of a difference of solutions does not exceed a small multiple of itself; this guarantees uniqueness.

Unconditional uniqueness in $C_t^{ }L^2_x(\R/\Z)$ was proved in \cite{MR2789490}.  By making a bijective change of unknown in the spirit of Birkhoff normal form, the authors reduce \eqref{kdv} to an integral equation that can be solved by contraction mapping in $C_t^{ }L^2_x(\R/\Z)$ without any auxiliary norms. Naturally, this yields $C_t^{ }L^2_x(\R/\Z)$ uniqueness.

Any attempt to prove unconditional uniqueness in $C_t^{ }H^s_x$ for $s<0$ must address a very real question: What does it mean for such a distribution to be a solution of \eqref{kdv}?  One cannot simply square such distributions!   Christ's work \cite{arXiv:0503366} gives one answer to this question and shows that it leads to nonuniqueness.  A competing notion, named green solutions, was introduced in \cite{KMV} specifically to give meaning to the unconditional uniqueness question for the white-noise solutions constructed therein, as well as the $C_t^{ }H^{-1}_x$ solutions constructed in \cite{MR2267286,KV19}.  These questions remain open.

Let us turn now to the case of quasi- and almost periodic initial data.  Recall that a function $f:\R\to\R$ is called \emph{quasiperiodic} if there is a finite dimensional torus $\R^n/\Z^n$, a vector $\omega\in\R^n$, and a continuous function $F:\R^n/\Z^n\to\R$ so that $f$ may be represented as
\begin{align}\label{quasi-p}
f(x) = F( x\omega + \Z^n) \qtq{for all} x\in\R.
\end{align}
Conventionally, one chooses $\omega$ so that its entries are linearly independent over $\Q$, for otherwise, one may just use a lower-dimensional torus.

The notion of an almost periodic function may be regarded as the $n=\infty$ case of the above.  We prefer to present the original definition (cf. \cite[\S44]{Bohr:book}).  A number $\ell\in\R$ is called an $\eps$ almost period of the function $f:\R\to\R$ if
\begin{align}\label{almost period}
\bigl\| f(x+\ell) - f(x) \bigr\|_{L^\infty(\R)} < \eps.
\end{align}
A function $f:\R\to\R$ is said to be \emph{almost periodic} if it is continuous and for every $\eps>0$ there is an $L_\eps>0$ so that every interval of length $L_\eps$ in $\R$ contains at least one $\eps$ almost period.  An equivalent characterization is given by Bohr's Fundamental Theorem of Almost Periodic Functions (cf. \cite[\S44]{Bohr:book}): a function $f:\R\to\R$ is almost periodic if and only if it can written as the uniform limit of finite trigonometric sums (with unrestricted frequencies).

A great deal of work has been devoted to the study of \eqref{kdv} with quasi- and almost periodic initial data, both as an end unto itself and through its connection, via the Lax-pair formulation, to the quantum mechanics of one-dimensional quasicrystals.  Naturally, the uniqueness of such solutions was investigated as an integral part of this program.  Nevertheless, we know of only one uniqueness result that may reasonably be categorized as unconditional, namely, that of \cite{BDGL18}.  This paper constructs solutions for almost periodic initial data under certain restrictions on the Schr\"odinger operator with this potential: the spectrum must be absolutely continuous, reflectionless, and satisfy Craig-type conditions.  The authors prove that there is only one solution with this initial data for which $q$, $q'$, and $\partial_t q$ remain bounded.  (By virtue of the equation, $q'''$ and $q''$ also remain bounded.)  Central to this achievement is the proof (building on ideas from \cite{Rybkin08}) that under these assumptions, all such solutions must retain the spectral properties imposed on the initial data.

The subsequent paper \cite{MR4122653} extends \cite{BDGL18} in two ways: it reduces the regularity requirements to boundedness of two spatial derivatives and it extends the uniqueness result to higher order flows in the \eqref{kdv} hierarchy (under stronger Craig-type conditions).  The authors of \cite{MR4122653} also observe that these techniques yield a new result for the case of periodic initial data, namely, classical solutions with periodic initial data for which $q$ and $q''$ remain bounded must remain periodic.

While we contend that Theorem~\ref{th:unique} provides a definitive resolution of the uniqueness question for almost periodic initial data, a great deal remains to be done regarding the existence question.  It is indicative of the difficulty of this problem that there are no known robust methods for obtaining a priori bounds on the solution.  The well-known conservation laws associated to KdV, including momentum and energy, are simply useless because they are all infinite.  The fundamental enemy is that the infinite momentum, for example, may all pile up in one place!  In Section~\ref{sec:stepanov} we give an example of almost periodic initial data for which precisely this happens, even under the simpler Airy dynamics:
\begin{equation}\label{Airy}
	\tfrac{d}{dt} q = - q''' .
\end{equation}

Going beyond well-posedness, it is natural to ask about the long time behaviour of solutions.  The numerical investigations \cite{KruskalZabusky} of the periodic case by Kruskal and Zabusky, which first thrust \eqref{kdv} into the limelight, already showed almost recurrence of the initial state after a short time.  Subsequent numerics and investigation of finite-gap solutions (cf. \cite{MR0382877,MR0404889}) further solidified the prediction that periodic solutions evolve almost periodically in time.  For smooth solutions, this was proved by McKean and Trubowitz in \cite{MT76}, with subsequent extension to $L^2$ in \cite{Bo93-2} and then to $H^{-1}$ in \cite{MR2267286}.

In \cite{Deift08}, and again in \cite{Deift17}, Deift made the conjecture that initial data that is almost periodic in space leads to solutions that are almost periodic in time.   We know of several rigorous results that support this thesis:  In \cite{Egorova}, Egorova proves that this is true for certain limit periodic initial data; it is required that the initial data may be extremely well approximated by periodic functions (faster than any exponential of the period).

The more recent work \cite{BDGL18} demonstrates almost periodicity of the solution for a disjoint class of initial conditions.  Theorem~1 of that paper resolves the conjecture for small real analytic quasiperiodic initial data with Diophantine frequency vector.  Their second theorem covers a broader class of initial data determined by the spectral theory of the Schr\"odinger operator associated to the initial data.  At least currently, these spectral assumptions can only be verified for similarly narrow classes of initial data.  These spectral conditions were further relaxed in \cite{EVY}.  The analogue of the Deift conjecture for higher flows in the integrable hierarchy is treated in \cite{EVY,MR4122653}.

Setting aside the formidable analytical difficulties that must be overcome, the approach pursued in these papers gives rise to an intuitive appreciation of why the Deift conjecture ought to be true:   The magic of complete integrability suggests that for almost periodic initial data, \eqref{kdv} together will all its commuting flows can be conjugated to commuting (Dubrovin) translations on a compact Abelian group (a  product of tori indexed by the spectral gaps).   The conjugation mapping provides a function that maps each point on the group to the value of the corresponding $q$ at the spatial origin.  By exploiting the fact that translation is one of the commuting flows, this sampling function allows one to reconstruct $q$ at all spatial points.

Taking this line of reasoning to its natural conclusion leads one to predict that the solution to \eqref{kdv} with almost periodic initial data is in fact an almost periodic function of \emph{spacetime}.  In particular, this promises that $t\mapsto q(t,x)$ is almost periodic for every $x$ and likewise, $x\mapsto q(t,x)$ is almost periodic for every $t$.  This strong formulation of the Deift conjecture is shown to hold in each of the papers \cite{BDGL18,Egorova,EVY,MR4122653}.  It is a great triumph and demonstrates the thorough understanding they have achieved for their classes of initial data.  However, as we will argue, it appears not to be the whole truth.

In doubting the unconditional veracity of the Deift conjecture, we are preceded by \cite{DLVY21}, which outlined an extensive program for potentially building a counterexample.   We will take a very different (and much simpler) approach; nevertheless, it was Damanik's lecture \cite{Damanik_ICERM} that stimulated our consideration of this problem.  There, it is also highlighted that continuity of the mapping conjugating \eqref{kdv} to group translations is pivotal to the reasoning laid out above; correspondingly, a keystone in their program is the construction of a regime in which there is such a conjugation but it is not continuous.

Our own investigations began with a seemingly innocuous question: Is the analogue of the Deift conjecture true for the Airy equation \eqref{Airy}?  This is something of an ill-posed problem: no definitive class of initial data has been specified nor has a metric been agreed upon.  What we will demonstrate is this: an originalist interpretation leads to the conclusion that it is false.  Building on this, we will demonstrate that the strong formulation of the Deift conjecture for \eqref{kdv} is likewise false:

\begin{theorem}\label{th:deift}
There is a bounded solution $q:[-T,T]\times\R\to\R$ of \eqref{kdv} with almost periodic initial data for which $x\mapsto q(t_0,x)$ is \emph{not} almost periodic at some time $t_0\in[-T,T]$. 
\end{theorem}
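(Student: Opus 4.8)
The plan is to let the linear \eqref{Airy} dynamics supply the mechanism and to treat the nonlinearity in \eqref{kdv} as a lower-order perturbation. Thus I would first prove the purely linear statement: there is an almost periodic $q_0$ whose Airy evolution $e^{-t\partial_x^3}q_0$ stays bounded on some interval $[-T,T]$ but fails to be almost periodic at a time $t_0$. In frequency the Airy flow is diagonal, multiplying the Fourier--Bohr coefficient at frequency $\xi$ by the unimodular factor $e^{it\xi^3}$; it therefore preserves the amplitude spectrum while rotating the phases by $t\xi^3$. The key point is that Bohr almost periodicity is an exquisitely phase-sensitive property: a trigonometric series may converge uniformly --- and so define an almost periodic function --- for one choice of phases and yet fail to converge uniformly for another. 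My aim is to design $q_0=\sum_k P_k$ as a uniformly convergent series of trigonometric polynomials, each $P_k$ assembled from a narrow packet of closely spaced high frequencies, so that (i) $\sum_k\|P_k\|_{L^\infty}<\infty$, which makes $q_0$ almost periodic, while (ii) at the single time $t_0$ the cubic phase rotation within the packets conspires to desynchronize the series and spoil its uniform recurrence.

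The decisive and hardest step is to certify that the resulting $t_0$-profile is genuinely \emph{not} almost periodic while remaining bounded. I would detect the obstruction at the level of the local $L^2$ (Stepanov) structure, which is controlled by Bohr almost periodicity: if $q(t_0,\cdot)$ were almost periodic, then so would be $q(t_0,\cdot)^2$, and hence so would be the local energy profile $x\mapsto\int_x^{x+1}q(t_0,y)^2\,dy$, which would in particular be uniformly recurrent. The construction is arranged so that this local energy profile is instead non-recurrent --- the oscillatory mass is redistributed by the dispersion in a pattern that never repeats in a relatively dense fashion --- contradicting Stepanov, hence Bohr, almost periodicity. Reconciling this with the boundedness demanded by Definition~\ref{def:solution} is precisely where the difficulty lies: the lost almost periodicity must be carried by many small-amplitude, tightly packed oscillations rather than by any genuine spike, so that $\|e^{-t\partial_x^3}q_0\|_{L^\infty}$ stays finite on $[-T,T]$ even as the local energy fails to equidistribute in the recurrent sense. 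Balancing the packet widths, spacings, and amplitudes to secure (i), (ii), boundedness, and the non-recurrence simultaneously is the technical crux, and the content I expect to occupy Section~\ref{sec:stepanov}.

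It remains to upgrade the linear counterexample to \eqref{kdv}. I would solve the equation perturbatively about the Airy evolution on the short interval $[-T,T]$, writing $q=e^{-t\partial_x^3}q_0+w$, where $w$ satisfies the Duhamel equation forced by $3\,\partial_x(q^2)$. Because the engineered data lives at high frequencies and has small amplitude, I expect this forcing to produce a correction $w$ that is both small in $L^\infty$ on $[-T,T]$ and spatially more regular than the leading term, so that it cannot restore the uniform recurrence destroyed at the linear level; consequently $q(t_0,\cdot)$ inherits the non-almost-periodicity of the Airy profile. The resulting $q$ is a bounded solution in the sense of Definition~\ref{def:solution}: its $L^\infty$ bound comes from combining the control on the Airy evolution with the smallness of $w$, and its weak-$*$ continuity \eqref{w*cts} from the Duhamel representation. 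Finally, Theorem~\ref{th:unique} identifies this $q$ as the unique bounded development of the almost periodic datum $q_0$, so that the asserted loss of almost periodicity is an intrinsic property of the KdV flow of $q_0$ rather than an artifact of the construction. The \emph{main obstacle} throughout remains the linear construction of the preceding paragraph; once it is in place, the nonlinear and uniqueness steps are, by comparison, perturbative.
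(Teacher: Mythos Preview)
Your overall architecture --- build a linear (Airy) counterexample, transport it to \eqref{kdv} by nonlinear smoothing, and invoke Theorem~\ref{th:unique} --- matches the paper's. The linear step, however, is both different from the paper's and, as written, contains a real gap.

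The paper does not design wave packets whose cubic phase rotations spoil uniform recurrence. It works \emph{backwards}: the KdV initial data is the explicit, bounded, \emph{discontinuous} function $f(x)=\operatorname{sq}(\alpha_1 x)+\operatorname{sq}(\alpha_2 x)$ with Diophantine $\alpha$, which fails Bohr almost periodicity simply because it is not continuous. Oskolkov's exponential-sum theorem (Theorem~\ref{th:airy}) then shows the Airy evolution $w(t)=e^{-t\partial_x^3}f$ is bounded for all $t$ and \emph{continuous} --- hence almost periodic --- whenever $\alpha_j t_0/2\pi$ are irrational. Tsugawa's quasiperiodic local well-posedness (Theorem~\ref{th:lwp}) produces a KdV solution $u$ with data $f$, and the nonlinear smoothing estimate (Theorem~\ref{prop:nonlinear-smoothing}) forces $u-w$ to have $\ell^1$ Fourier coefficients, hence to be continuous and almost periodic. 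Thus $u(t_0)$ is almost periodic while $u(0)=f$ is not; the time translation $q(t):=u(t+t_0)$ gives the theorem. The obstruction to almost periodicity is bare discontinuity, verified at a glance, not a Stepanov-type non-recurrence argument.

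The gap in your plan is the linear construction. You intend to certify failure of almost periodicity via non-recurrence of the local energy $x\mapsto\int_x^{x+1}|q(t_0,y)|^2\,dy$ while keeping $q(t_0)\in L^\infty$, and you expect Section~\ref{sec:stepanov} to carry this out. That section does the opposite: it exhibits almost periodic data whose Airy evolution satisfies $\int|u(t_0,x)|^2 e^{-x^2}\,dx=\infty$, so the solution \emph{leaves} $L^\infty$. It is presented as evidence that relaxing to Stepanov almost periodicity is the wrong remedy, not as the engine behind Theorem~\ref{th:deift}. Your packet scheme, as stated, supplies no mechanism for simultaneously keeping $\|e^{-t\partial_x^3}q_0\|_{L^\infty}$ finite and making the $t_0$-profile provably non-almost-periodic: if the packet amplitudes are $\ell^1$ the Airy flow preserves almost periodicity trivially, while if they are only weak-$\ell^1$ the boundedness claim becomes nontrivial --- exactly the role Oskolkov's theorem plays for the paper's explicit data. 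The perturbative upgrade to KdV is also less generic than you suggest: the paper relies on Tsugawa's quasiperiodic $X^{\theta,\frac12}$ spaces and a Diophantine nonlinear smoothing estimate (Section~\ref{sec:quasi}), not a small-data contraction in $L^\infty$.
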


Here bounded solution means in the sense of Definition~\ref{def:solution}.  By Theorem~\ref{th:unique}, there can be no better-behaved solution with this same initial data.  

Let us first consider the Airy equation \eqref{Airy}.  When the initial data is a trigonometric sum with $\ell^1$ coefficients (and unrestricted frequencies), it is elementary to see that the corresponding solution is an almost periodic function of spacetime.  Evidently, we must go beyond this class.

The coefficients of our example will be weak-$\ell^1$.   This class includes discontinuous functions, for example, the $2\pi$-periodic square wave
\begin{align}\label{sq wave}
\operatorname{sq}(x)  = \sgn \big( \sin(x) \big) = \sum_{\xi \in \Z \text{ odd }} \tfrac{2}{\pi i \xi} e^{i \xi x }.
\end{align} 
Although this function is periodic, it is not almost periodic because it is not continuous.  While we would never dream of demanding continuity of a periodic function, Bohr insisted on this condition with good reason.  It is obvious that approximation by trigonometric polynomials would fail if one omitted continuity.  Bohr saw deeper: without continuity, the sum of two almost periodic functions need not be almost periodic!  Consider
\begin{align}\label{f is square}
f(x) = \operatorname{sq}(x) + \operatorname{sq}(\alpha x) \qtq{with} \alpha\in\R\setminus\Q .
\end{align}
Evidently, each summand is periodic. However, the sum is devoid of meaningful almost periods:  if $\eps\leq1$ then \eqref{almost period} holds if and only if $\ell=0$.

We may disprove the Deift conjecture (as formulated above) for the Airy equation by exhibiting almost periodic initial data whose later evolution coincides with \eqref{f is square}.  For expository reasons, it will be better to use \eqref{f is square} as initial data and then verify that at some other time this solution is almost periodic in space.  Note that both Airy and \eqref{kdv} are time translation invariant; they also admit the same time-reversal symmetry: $q(t,x)\mapsto q(-t,-x)$.

As a linear equation, the Airy evolution satisfies the principle of superposition. Correspondingly, to understand the solution $q(t,x)$ with initial data \eqref{f is square}, we need only study the evolution $w$ of a single periodic square wave: 
\begin{equation}\label{E:Airy sq}
\partial_t w = - w''' \qtq{with} w(0,x) = \operatorname{sq}(x)
\end{equation}
for then $q(t,x)=w(t,x) + w(\alpha^3t,\alpha x)$.  As it is periodic, the solution $w$ may be understood through the highly-developed theory of exponential sums over $\Z$ with polynomial phases.  The fine estimates that we will need are already known; they (and indeed much more) were proved by Oskolkov in \cite{oskolkov}; see Theorem~\ref{th:airy}.

Oskolkov proves that $x\mapsto w(t,x)$ is continuous whenever $t/2\pi$ is irrational.  Thus, if both $t/2\pi$ and $\alpha t/2\pi$ are irrational, then $q(t,x)$ is the sum of two continuous periodic functions and consequently almost periodic (it is even quasiperiodic!).  This proves the analogue of Theorem~\ref{th:deift} for the Airy equation.  

To prove Theorem~\ref{th:deift}, we wish to consider the solution of \eqref{kdv} with the initial data \eqref{f is square}.  But does such a solution even exist?  It is indicative of the subtlety of almost periodic initial data  that this is a nontrivial problem.  By developing a variant of the $X^{s,b}$ theory adapted to quasiperiodic (in the Stepanov, not Bohr, sense) initial data, Tsugawa constructs local-in-time solutions to \eqref{kdv} for certain types of initial data.  This result does apply to the initial data \eqref{f is square}; however, it does not automatically guarantee that the solution is bounded, neither in the naive sense, nor in the more precise sense of Definition~\ref{def:solution}.  This we will need to prove ourselves.

In order to prove Theorem~\ref{th:deift} by building on our observations for the Airy equation, the key step is to show that the solution with initial data \eqref{f is square} is continuous at some later (or earlier) time $t_1$.  To do this, we demonstrate a suitable nonlinear smoothing effect.  This is a broadly observed  phenomenon that the difference between a solution to a nonlinear dispersive equation and the linear evolution with the same initial data is smoother than either of the two solutions individually.  When solutions are constructed by the traditional combination of contraction mapping and the Duhamel formula, it is evident that this difference is smaller than either solution.  To exhibit nonlinear smoothing one must also exhibit a little extra smoothing from the spacetime integral.

This nonlinear smoothing effect is thoroughly discussed in the periodic setting in \cite{MR3559154}.  Evidently, we need to demonstrate nonlinear smoothing in the quasiperiodic setting, which we do not believe has been done previously.   This is the principal topic of Section~\ref{sec:quasi}, where we show that the difference between the \eqref{kdv} and Airy solutions is a continuous function in spacetime.  This is all that is needed to prove Theorem~\ref{th:deift}.   Given this specific goal, we strive for simplicity over generality and impose a Diophantine condition on the wavenumber $\alpha$ appearing in \eqref{f is square}.

Theorem~\ref{th:deift} focuses on just two times: one where the solution \emph{is} almost periodic and another when it is not.  It is natural to ask how it behaves for every time in $[-T,T]$.  This we can answer.  Our nonlinear smoothing result guarantees that $x\mapsto q(t,x)$ is almost periodic for a given time $t$ if and only if the corresponding linear solution is almost periodic.  This question in turn can be answered from the work of Oskolkov \cite{oskolkov}:  For the function $w$ defined in \eqref{E:Airy sq}, the mapping $x\mapsto w(t,x)$ is continuous if and only if $t/2\pi$ is irrational.  When $t/2\pi$ is rational, $x\mapsto w(t,x)$ has jump discontinuities --- indeed, it is piecewise constant!

This recurrence of discontinuities is evidently quite remarkable.  Indeed, a closer analysis shows that it is, in fact, the whole initial data that is being revisited!  This phenomenon was first observed by Talbot \cite{talbot} in optical experiments and so is known as the Talbot effect.  These experiments are better modeled by the linear Schr\"odinger equation, rather than the Airy equation; nevertheless, this does not significantly alter the underlying mathematics.  For a further, fuller discussion of the Talbot effect from both mathematical and physical points of view, see \cite{BerryKlein,BMS2022,MR3094557,MR3559154,OlverAMM}.

At this moment, we do not know whether the solution to \eqref{kdv} with initial data \eqref{f is square} exists globally in time.  However, this does not entirely preclude us from asking whether $t\mapsto q(t,0)$ is almost periodic.  For if the solution truly blows up, then this function is definitely not almost periodic.  On the other hand, our nonlinear smoothing estimate remains valid for as long as the solution persists in the function spaces of Theorem~\ref{th:lwp}.  In this way, we are lead to ask if $t\mapsto q(t,0)$ is almost periodic when $q$ is the solution to the Airy equation with initial data \eqref{f is square}.  It is not; see \cite[p. 390]{oskolkov}, where it is further explained that the answer would reverse if one considered the linear Schr\"odinger instead!

Given the mild nature of the loss of almost periodicity we exhibit to prove Theorem~\ref{th:deift}, it is natural to imagine that the veracity of the Deift conjecture might be restored if one simply relaxed the notion of almost periodicity so as to include the function \eqref{f is square}.  This relaxation comes at the cost: it enlarges the class of solutions that need to be understood. 

In Section~\ref{sec:stepanov}, we argue that relaxing the definition of almost periodicity is the wrong direction.  Concretely, we show that there is a Bohr almost periodic function whose Airy evolution undergoes an infinite concentration of $L^2$ norm in finite time.


\subsection*{Acknowledgements} R.K. was supported by NSF grant DMS--2154022 and M.V. by NSF grants DMS--1763074 and DMS--2054194.

\section{The Green's function}\label{sec:green}

This section is dedicated to the analysis of the Green's function associated to the Schr\"odinger operator 
\begin{align}\label{L op}
L:= - \dx^2 + q
\end{align} 
for potentials $q \in L^\infty(\R)$.

When $q\equiv 0$, the resolvent $R_0(\kappa) = (-\dx^2 + \kappa^2)^{-1}$ has integral kernel 
\begin{align*}
G_0(x,y;\kappa) = \tfrac{1}{2\kappa} e^{-\kappa|x-y|} \qtq{for all} \kappa>0.
\end{align*}

The Kato--Rellich Theorem guarantees that there exists a unique self-adjoint operator $L$ for $q\in L^\infty(\R)$. For $\kappa^2 \geq 4\|q\|_{L^\infty(\R)}$, the resolvent $R(\kappa) = (L + \kappa^2)^{-1}$ is given by the norm-convergent series expansion
\begin{equation*}
R(\kappa) = \sum_{\l=0}^\infty (-1)^\l \bigl(R_0(\kappa) q \bigr)^\l R_0(\kappa).
\end{equation*}

For our purposes, it is more convenient to eschew operator-theoretic considerations and work directly with the corresponding series expansion of the Green's function, whose terms take the form
\begin{align}\label{Neumann term}
\big< \dl_x , (R_0 q)^\l R_0\dl_y \big>
	= \int G_0(x,x_1) \bigg( \prod_{j=1}^{\l-1} q(x_j) G_0(x_j, x_{j+1}) \bigg) q (x_\l) G_0(x_\l, y) \, dx_1 \cdots dx_\l .
\end{align}

\begin{proposition}\label{prop:Green}
Let $q\in L^\infty(\R)$ and $\kappa^2\geq 4\|q\|_{L^\infty}$. The resolvent $R$ admits a continuous integral kernel defined by the absolutely convergent series
\begin{equation}\label{greens}
G(x,y;\kappa,q) = \sum_{\l=0}^\infty (-1)^\l \big< \dl_x , \bigl(R_0(\kappa) q \bigr)^\l R_0(\kappa)\dl_y \big>,
\end{equation}
which satisfies $G(x,y)=G(y,x)$ and 
\begin{equation}\label{G-upper}
|G(x,y) | \leq \tfrac{3}{4\kappa} e^{-\frac\kappa2|x-y|}. 
\end{equation}
Moreover, the diagonal Green's function $g(x;\kappa,q) := G(x,x; \kappa,q)$ satisfies
\begin{align}\label{g-bdd}
\tfrac{1}{4\kappa} \leq g(x) \leq \tfrac{3}{4\kappa} \quad\text{for all $x\in\R$}.
\end{align}
Finally, if $q_n\in L^\infty(\R)$ satisfy 
\begin{equation}\label{qn to q}
\| q_n\|_{L^\infty} \leq \| q \|_{L^\infty} \qtq{and} q_n \weakstar q \text{ in } L^\infty(\R) \text{ as } n\to\infty,
\end{equation}
then the corresponding Green's functions converge pointwise.
\end{proposition}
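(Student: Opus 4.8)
The plan is to treat the Neumann series \eqref{greens} term by term, bounding each term by replacing $q$ with its sup-norm $M:=\|q\|_{L^\infty}$, and then summing the resulting dominating series by recognizing it as a shifted free resolvent. Since the chain integral in \eqref{Neumann term} is precisely the kernel of $(R_0 q)^\l R_0$, bounding $|q(x_j)|\le M$ and pulling the potentials out gives
\begin{equation*}
\bigl|\langle\delta_x,(R_0 q)^\l R_0\delta_y\rangle\bigr| \le M^\l\, R_0^{\l+1}(x,y),
\end{equation*}
where $R_0^{\l+1}(x,y)\ge 0$ is the kernel of $(-\dx^2+\kappa^2)^{-(\l+1)}$. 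The key observation is that
\begin{equation*}
\sum_{\l=0}^\infty M^\l R_0^{\l+1} = R_0(1-MR_0)^{-1} = \bigl(-\dx^2 + (\kappa^2-M)\bigr)^{-1},
\end{equation*}
which is legitimate because $\kappa^2\ge 4M$ makes $MR_0$ a contraction, and which has the explicit positive kernel $\tfrac{1}{2\sqrt{\kappa^2-M}}\,e^{-\sqrt{\kappa^2-M}\,|x-y|}$. As $\kappa^2\ge 4M$ forces $\sqrt{\kappa^2-M}\ge\tfrac{\sqrt3}{2}\kappa\ge\tfrac\kappa2$, this dominating sum is at most $\tfrac{1}{\sqrt3\,\kappa}e^{-\frac\kappa2|x-y|}\le\tfrac{3}{4\kappa}e^{-\frac\kappa2|x-y|}$, which simultaneously establishes absolute convergence of \eqref{greens} and the bound \eqref{G-upper}. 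Each Neumann term is jointly continuous in $(x,y)$ by dominated convergence (using that $G_0$ is continuous with an integrable majorant), and the geometric domination is uniform, so $G$ is continuous. Symmetry $G(x,y)=G(y,x)$ is immediate from $G_0(a,b)=G_0(b,a)$ upon reversing the order of the integration variables $x_j\mapsto x_{\l+1-j}$ in each term.

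For the diagonal bounds \eqref{g-bdd}, the $\l=0$ term contributes exactly $G_0(x,x)=\tfrac1{2\kappa}$, while the tail is a controlled perturbation. Using $R_0^{\l+1}(x,x)\le\tfrac{1}{2\kappa^{2\l+1}}$ (bound $(\xi^2+\kappa^2)^{-1}\le\kappa^{-2}$ in $\l$ of the $\l+1$ factors on the Fourier side), one gets $\bigl|g(x)-\tfrac1{2\kappa}\bigr|\le\sum_{\l\ge1}\tfrac{1}{2\kappa}(M/\kappa^2)^\l\le\tfrac{1}{6\kappa}$ when $\kappa^2\ge4M$. Hence $\tfrac{1}{3\kappa}\le g(x)\le\tfrac{2}{3\kappa}$, which is in fact stronger than the asserted $[\tfrac{1}{4\kappa},\tfrac{3}{4\kappa}]$.

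Finally, for the convergence under \eqref{qn to q}, the strategy is to prove convergence of each Neumann term and then sum using the uniform-in-$n$ geometric domination (valid because $\|q_n\|_{L^\infty}\le\|q\|_{L^\infty}$ keeps $\kappa^2\ge4\|q_n\|_{L^\infty}$). For fixed $\l$, the difference of the $\l$-th terms for $q_n$ and $q$ telescopes into $\l$ multilinear pieces, each carrying a single factor $q_n-q$ in some slot $k$, with $q_n$ upstream and $q$ downstream. The downstream integration produces a \emph{fixed} $L^1$ function $\psi$ of $x_k$, but the upstream integration $\chi_n(x_k)=\langle\delta_x,(R_0q_n)^{k-1}R_0\delta_{x_k}\rangle$ still depends on $n$. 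The main obstacle is exactly here: weak-$*$ convergence does not survive multiplication, so one cannot pass directly to the limit. The remedy exploits that $R_0$ has a continuous, integrable kernel: one shows by induction on $k$ that $q_n\weakstar q$ forces $\chi_n\to\chi_\infty$ pointwise with a uniform integrable majorant. Granting this, each piece $\int(q_n-q)\chi_n\psi$ splits as $\int(q_n-q)\chi_\infty\psi+\int(q_n-q)(\chi_n-\chi_\infty)\psi$; the first term vanishes by weak-$*$ convergence tested against the fixed $L^1$ function $\chi_\infty\psi$, and the second by dominated convergence. Summing over $\l$ against the uniform geometric bound then yields $G(x,y;\kappa,q_n)\to G(x,y;\kappa,q)$, as claimed.
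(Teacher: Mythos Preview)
Your proof is correct, but it diverges from the paper's in two places, and the comparison is instructive.

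For the bounds \eqref{G-upper} and \eqref{g-bdd}, the paper estimates each term directly: it splits $e^{-\kappa|x_j-x_{j+1}|}=e^{-\frac\kappa2|x_j-x_{j+1}|}e^{-\frac\kappa2|x_j-x_{j+1}|}$, uses the triangle inequality to extract a global factor $e^{-\frac\kappa2|x-y|}$, and then integrates the remaining half-weight Gaussians. This yields $|\langle\delta_x,(R_0q)^\ell R_0\delta_y\rangle|\le\frac{1}{4\kappa}(2\|q\|_{L^\infty}/\kappa^2)^\ell e^{-\frac\kappa2|x-y|}$, summing to $|G-G_0|\le\frac{1}{4\kappa}e^{-\frac\kappa2|x-y|}$. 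Your shifted-resolvent argument, recognizing $\sum_\ell M^\ell R_0^{\ell+1}$ as the kernel of $(-\partial_x^2+\kappa^2-M)^{-1}$, is cleaner and gives sharper constants; it is a nice observation that the majorizing series re-sums exactly.

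For the weak-$*$ convergence, however, the paper's argument is considerably simpler than your telescoping induction. The paper observes directly that if $q_n\weakstar q$ in $L^\infty(\R)$, then the tensor products satisfy $q_n(x_1)\cdots q_n(x_\ell)\weakstar q(x_1)\cdots q(x_\ell)$ in $L^\infty(\R^\ell)$: this holds because finite sums of indicator functions of rectangles are dense in $L^1(\R^\ell)$, and against such test functions the pairing factors into a product of one-dimensional pairings. With this single observation, each Neumann term \eqref{Neumann term} converges immediately (the remaining factor $G_0(x,x_1)\cdots G_0(x_\ell,y)$ is a fixed $L^1$ function of $(x_1,\ldots,x_\ell)$), and the tail is controlled by the uniform geometric bound. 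Your approach works, but the inductive bookkeeping---tracking that $\chi_n\to\chi_\infty$ pointwise with integrable majorant, then splitting $\int(q_n-q)\chi_n\psi$ and bounding the second piece via $\|(\chi_n-\chi_\infty)\psi\|_{L^1}\to 0$---is doing by hand what the tensor-product observation does in one line.
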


\begin{proof}
For each $\l\geq 0$, the kernel $\langle\dl_x , (R_0 q)^\l R_0 \dl_y\rangle$ of the operator $(R_0 q)^\l R_0$ is continuous in $(x,y)$. This follows easily from the continuity of $G_0(x,y)$. Also, for $\l\geq1$, we may bound 
\begin{align}
&\big| \big< \dl_x , (R_0 q)^\l R_0\dl_y \big> \big| \nonumber \\
& = \bigg| \int G_0(x,x_1) \bigg( \prod_{j=1}^{\l-1} q(x_j) G_0(x_j, x_{j+1}) \bigg) q (x_\l) G_0(x_\l, y) \, dx_1 \cdots dx_\l \bigg| \nonumber\\
& \leq \frac{1}{2\kappa} \bigg(\frac{\|q\|_{L^\infty}}{2\kappa} \bigg)^\l \int e^{-\kappa|x-x_1| - \kappa|x_1-x_2| - \cdots - \kappa|x_{\l-1}-x_\l| - \kappa|x_\l - y| } \, dx_1 \cdots dx_\l \nonumber\\ 
& \leq \frac{1}{2\kappa} \bigg(\frac{\|q\|_{L^\infty}}{2\kappa} \bigg)^\l e^{-\frac\kappa2|x-y|}\int e^{-\frac\kappa2|x-x_1| - \frac\kappa2 |x_1-x_2| - \cdots - \frac\kappa2|x_{\l-1}-x_\l| - \frac\kappa2|x_\l - y| } \, dx_1 \cdots dx_\l \nonumber\\
& \leq \frac{1}{4\kappa} \bigg(\frac{2 \|q\|_{L^\infty}}{\kappa^2} \bigg)^\l e^{-\frac\kappa2|x-y|} ,\label{g-term-bound}
\end{align}
where in the last step we used the Cauchy--Schwarz inequality in the $x_1$ variable and integrated in the remaining variables. Therefore, the series \eqref{greens} converges absolutely and uniformly in $(x,y)$ and
\begin{align*}
\bigl|G(x,y) -G_0(x,y) \bigr| & \leq  \tfrac{1}{4\kappa} e^{-\frac\kappa2|x-y|} 
\end{align*}
whenever $\kappa^2\geq 4\|q\|_{L^\infty}$.  This proves the continuity of $G(x,y)$, \eqref{G-upper}, and \eqref{g-bdd}.

The $x\leftrightarrow y$ symmetry of $G$ is inherited directly from the symmetry of the individual terms in the series, which in turn follows from the corresponding symmetry of $G_0$.   

We turn now to the behavior of the Green's function under the conditions \eqref{qn to q}.  Let us first observe that for any $\ell\geq 1$,
$$
q_n(x_1) q_n(x_2) q_n(x_3) \cdots q_n(x_\ell)  \weakstar q(x_1) q(x_2) q(x_3) \cdots q(x_\ell) \quad\text{in $L^\infty(\R^\ell)$.}
$$
This is a direct consequence of the fact that any function in $L^1(\R^\ell)$ may be approximated by a finite linear combination of indicator functions of rectangles.

Looking to the expression \eqref{Neumann term}, we see that the weak-$*$ convergence just observed guarantees that  $\big< \dl_x , (R_0 q_n)^\l R_0\dl_y \big>$ converges to the corresponding term for $q$ for each fixed $x,y\in \R$.   Pointwise convergence of the full series then follows from this and the bound \eqref{g-term-bound}, which controls the tail of the series.
\end{proof}

Using the maximum principle, one can obtain sharp upper and lower pointwise bounds on the Green's function in terms of the $L^\infty$ norm of $q$. However, these bounds are more cumbersome than  \eqref{G-upper} and \eqref{g-bdd} and provide no advantage for the arguments we will be presenting.

\begin{lemma}\label{L:g deriv}
Let $q\in L^\infty(\R)$ and $\kappa^2 \geq 4 \|q\|_{L^\infty}$. Then $G(x,y)$ is an absolutely continuous function of $x$; moreover, for $x\neq y$,
\begin{equation}\label{G-prime-bdd}
\Big|\tfrac{d}{dx} G(x,y)\Big| \leq \tfrac{3}{4} e^{-\frac\kappa2|x-y|}.
\end{equation}
The diagonal Green's function $g$ is differentiable and 
\begin{align}\label{g-prime}
|g'(x)| \leq \tfrac12.
\end{align}
Finally, if $q_n\in L^\infty(\R)$ satisfy \eqref{qn to q}, then 
\begin{align}\label{Gn-prime-conv}
\tfrac{d}{dx} G(x,y;q_n) \to \tfrac{d}{dx}G(x,y;q) \, \text{ pointwise a.e.} \quad \text{ and } \quad g'_n \to g' \text{ pointwise as } n\to \infty.
\end{align}
Here and below, $g_n(x) = g(x;q_n)$.
\end{lemma}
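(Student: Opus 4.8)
The plan is to differentiate the Neumann series \eqref{greens} term by term. In each summand \eqref{Neumann term} the variable $x$ enters only through the first factor $G_0(x,x_1)$, whose derivative $\tfrac{d}{dx}G_0(x,x_1) = -\tfrac12\sgn(x-x_1)\,e^{-\kappa|x-x_1|}$ obeys $\bigl|\tfrac{d}{dx}G_0(x,x_1)\bigr| \le \kappa\, G_0(x,x_1)$. Thus differentiating the $\l$-th term replaces one factor of $\tfrac1{2\kappa}$ by $\tfrac12$, i.e.\ multiplies the integrand bound by $\kappa$. Repeating verbatim the Cauchy--Schwarz estimate that produced \eqref{g-term-bound}, I obtain, for $\l\ge1$, the pointwise bound $\bigl|\tfrac{d}{dx}\langle\dl_x,(R_0q)^\l R_0\dl_y\rangle\bigr| \le \tfrac14\bigl(\tfrac{2\|q\|_{L^\infty}}{\kappa^2}\bigr)^\l e^{-\frac\kappa2|x-y|}$, while the $\l=0$ term contributes at most $\tfrac12 e^{-\kappa|x-y|}\le\tfrac12 e^{-\frac\kappa2|x-y|}$ for $x\ne y$. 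Since $\kappa^2\ge4\|q\|_{L^\infty}$ forces the ratio $\tfrac{2\|q\|_{L^\infty}}{\kappa^2}\le\tfrac12$, summing the geometric series gives $\tfrac12+\tfrac14=\tfrac34$, which is exactly \eqref{G-prime-bdd}. Absolute continuity of $x\mapsto G(x,y)$ then follows because the partial sums are absolutely continuous and their derivatives converge uniformly; passing to the limit in $\int_a^x(\cdots)$ identifies $\tfrac{d}{dx}G$ with the (a.e.-defined) sum of the termwise derivatives.

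For the diagonal bound \eqref{g-prime}, the improved factor $\tfrac12$ comes from two observations. First, on the diagonal the $\l=0$ term equals the constant $G_0(x,x)=\tfrac1{2\kappa}$ and so contributes nothing to $g'$. Second, for $\l\ge1$ the variable $x$ now appears in \emph{both} the first kernel $G_0(x,x_1)$ and the last kernel $G_0(x_\l,x)$; differentiating either one gains a factor $\kappa$ and, by the $y=x$ version of the estimate above together with the $x\lrarrow y$ symmetry, each contributes at most $\tfrac14\bigl(\tfrac{2\|q\|_{L^\infty}}{\kappa^2}\bigr)^\l$. Adding the two contributions and summing over $\l\ge1$ yields $\sum_{\l\ge1}\tfrac12\bigl(\tfrac{2\|q\|_{L^\infty}}{\kappa^2}\bigr)^\l\le\tfrac12$. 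That $g$ is genuinely differentiable (not merely a.e.) I justify by differentiating under the integral sign in each diagonal term --- the resulting integrand is dominated by an integrable function and is continuous in $x$, so each diagonal term is $C^1$ --- and then invoking uniform convergence of the series of derivatives.

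The convergence claim \eqref{Gn-prime-conv} I prove exactly as the corresponding statement in Proposition~\ref{prop:Green}. For each fixed $\l$, the differentiated kernel (one $G_0$ replaced by its $x$-derivative) is an $L^1(\R^\l)$ function of $(x_1,\dots,x_\l)$ for fixed $x,y$, and the product $q_n(x_1)\cdots q_n(x_\l)$ converges weak-$*$ to $q(x_1)\cdots q(x_\l)$ in $L^\infty(\R^\l)$ under the hypotheses \eqref{qn to q}; hence the $\l$-th term converges. Because $\|q_n\|_{L^\infty}\le\|q\|_{L^\infty}$, the tail bound just established is uniform in $n$, so the full series converges pointwise to the right limit. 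The same reasoning applies on the diagonal, where the $\l=0$ term is $n$-independent and has vanishing derivative.

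The one genuinely delicate point is the diagonal: one must notice the cancellation of the $\l=0$ term and correctly account for the two $x$-dependent kernels to obtain the sharp constant $\tfrac12$, and one must argue honest differentiability rather than settling for an a.e.\ derivative. A minor but necessary caveat is that $\tfrac{d}{dx}G(x,y)$ fails to exist precisely on the diagonal $x=y$, owing to the kink of $G_0$; this is why \eqref{G-prime-bdd} is asserted only for $x\ne y$ and why the convergence in \eqref{Gn-prime-conv} is claimed only pointwise a.e.
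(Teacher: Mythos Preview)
Your proof is correct and follows the same approach as the paper: differentiate the Neumann series term by term, bound each term by $\tfrac14\bigl(\tfrac{2\|q\|_{L^\infty}}{\kappa^2}\bigr)^\l e^{-\frac\kappa2|x-y|}$ via the argument that produced \eqref{g-term-bound}, and sum the geometric series (adding the $\ell=0$ term off-diagonal, omitting it on the diagonal but doubling for the two $x$-dependent kernels). Your treatment is in fact slightly more explicit than the paper's on two points---the chain rule on the diagonal and the passage from termwise derivatives to absolute continuity of the sum---but the argument is otherwise identical, including the convergence claim, which both you and the paper reduce to the weak-$*$ convergence of tensor products already used in Proposition~\ref{prop:Green}.
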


\begin{proof}
Although $G_0(x,y)$ is only classically differentiable where $x\neq y$, it is Lipschitz and so absolutely continuous with distributional derivative
\begin{equation}\label{G_0 prime}
\tfrac{d}{dx}  G_0(x,y) = - \tfrac12 \sgn(x-y) e^{-\kappa|x-y|}  .
\end{equation}
By comparison, $G_0(x,x)\equiv \frac1{2\kappa}$ and so differentiable in the classical sense.

One easily sees (via dominated convergence) that for each $\ell\geq 1$, the term \eqref{Neumann term} is differentiable (in the classical sense) with respect to $x$.  Moreover,
\begin{align*}
\tfrac{d}{dx}\big< \dl_x , (R_0 q)^\l R_0\dl_y \big>
	= \int \tfrac{\partial G_0}{\partial x}(x,x_1) \bigg( \prod_{j=1}^{\l-1} q(x_j) G_0(x_j, x_{j+1}) \bigg) q (x_\l) G_0(x_\l, y) \, dx_1 \cdots dx_\l .
\end{align*}
Mimicking \eqref{g-term-bound}, we find that
\begin{align}\label{11:11}
\Big| \tfrac{d}{dx} \big< \dl_x , (R_0 q)^\l R_0\dl_y \big> \Big| \leq \frac{1}{4} \bigg(\frac{2 \|q\|_{L^\infty}}{\kappa^2} \bigg)^\l e^{-\frac\kappa2|x-y|} .
\end{align}

This bound can be summed in $\ell$, showing that $G(x,y)-G_0(x,y)$ is everywhere classically differentiable and that
$$
\Big|\tfrac{d}{dx} \bigl[ G(x,y) - G_0(x,y) \bigr]\Big| \leq \tfrac{1}{4} e^{-\frac\kappa2|x-y|} \qtq{and} \Big|\tfrac{d}{dx} \bigl[ G(x,x) - G_0(x,x) \bigr]\Big| \leq \tfrac{1}{2} 
$$
whenever $\kappa^2\geq 4\|q\|_{L^\infty}$.  The claims \eqref{G-prime-bdd} and \eqref{g-prime} follow from this and our observations about $G_0$.

The claims \eqref{Gn-prime-conv} follow easily via the model laid out in the proof of Proposition~\ref{prop:Green}: one observes convergence for each individual term in the series and then exploits \eqref{11:11} in order to sum.
\end{proof}

\section{Uniqueness}\label{sec:3}
In this section, we prove Theorem~\ref{th:unique}.  Our argument will ultimately reduce to an application of the Gronwall inequality based on a subtle choice of distance function.  Our first task is to derive the integral identity to which we will apply the Gronwall inequality.  For smooth solutions, this is a direct but lengthy computation.  As our solutions are merely bounded, the derivation requires an approximation argument and hence the consideration of a forced KdV equation. 

By analogy with Definition~\ref{def:solution}, we define solutions to the forced KdV equation
\begin{equation}\label{forcing}
\tfrac{d}{dt} q = - q''' + 6 qq' + F
\end{equation}
with forcing $F\in L^\infty(\R^2)$ to be any weak-$\ast$ continuous distributional solution.

\begin{proposition}\label{prop:identities}
Let $q_1,q_2 \in L^\infty((-T,T)\times\R)$ be solutions to \eqref{forcing} with $q_1(0) = q_2(0)$ and smooth and bounded spacetime forcing terms $ F_1, F_2$, respectively.
Suppose $\kappa^2 \geq 4 \max\{\|q_{1}\|_{L_{t,x}^\infty}, \|q_{2}\|_{L_{t,x}^\infty}\}$ and let $g_1(t,x)=g(x;\kappa, q_1(t))$ and $g_2(t,x)=g(x;\kappa, q_2(t))$. Then for all $-T< t_0<T$ and every $\psi \in \mathcal S(\R)$, we have that
\begin{align}
\bigg[ \int_\R \frac{(g_1-g_2)^2}{2g_1g_2}& \psi \, dx \bigg] (t_0)\nonumber\\
& = \int_0^{t_0} \int_\R \frac{(g_1-g_2)^2}{2g_1g_2} \psi \bigg\{ -\frac{\psi'''}{2\psi} + \frac32\frac{\psi''}{\psi}  A_2 - \frac32  \frac{\psi'}{\psi} A_1 + \frac32 A_0 \bigg\} \, dx \, dt  \nonumber\\
&\quad + \frac32 \int_0^{t_0} \int_\R \psi' \, \frac{g_1-g_2}{g_1g_2} \bigg[ 2q_1 g_1 -  2q_2g_2 + \frac{(g'_1)^2}{2g_1} - \frac{(g'_2)^2}{2g_2} \bigg] \,dx \, dt \nonumber\\
&\quad - \int_0^{t_0} \int_\R \psi \,  \frac{g_1^2 - g_2^2}{2g_1g_2}\bigg(\frac{\widetilde{F}_1}{g_1} - \frac{\widetilde{F}_2}{g_2} \bigg) dx\, dt, \label{gronwall}
\end{align}
where the spacetime functions $\widetilde{F}_j, A_0,A_1,A_2$ are given by
\begin{align*}
&\widetilde{F}_j(t,x)  := \int G(x,y; \kappa, q_j(t)) F_j(t,y) G(y,x;\kappa,q_j(t)) \, dy, \\
&A_2 : = - \bigg(\frac{g_1'}{g_2} + \frac{g_2'}{g_1} \bigg)+ 2 \bigg(\frac{g_1'}{g_1} + \frac{g_2'}{g_2} \bigg) , \\
&A_1: = \frac52 \bigg( \frac{g'_1}{g_1} + \frac{g'_2}{g_2}\bigg)^2 - 7\frac{g'_1g'_2}{g_1g_2} - 12 \kappa^2 + \frac32 \frac{(g_1'-g_2')^2}{g_1g_2} + \bigg( \frac12 - 2 g'_1 g'_2\bigg) \bigg( \frac{1}{g_1} - \frac{1}{g_2} \bigg)^2 \\
&\qquad \,+ 2 q_1 \frac{g_1}{g_2} + 2 q_2 \frac{g_2}{g_1} + 2\kappa^2 \bigg( \frac{g_1}{g_2} + \frac{g_2}{g_1} \bigg) - 2 (q_1 + q_2) , \\
&A_0: = - \bigg( \frac{g_1'}{g_1^3} + \frac{g_2'}{g_2^3} \bigg) - \bigg( 2 \kappa^2 - \frac{g_1'g_2'}{2g_1g_2}\bigg) \bigg( \frac{g_1'}{g_1} + \frac{g_2'}{g_2} \bigg) - 2\bigg( \frac{q_1 g_2' }{g_2} +\frac{q_2 g_1' }{g_1} \bigg)\\
&\qquad \,+ \frac{1}{2g_1g_2}\bigg( \frac{g_1' }{g_2} + \frac{g_2'}{g_1} \bigg) .
\end{align*}
In addition, for every fixed $-T<t_0<T$, 
\begin{align}
\int_\R (q_1-q_2)&(t_0) \psi \, dx\nonumber\\
& = \frac14 \int_\R  (g_1-g_2) \psi \bigg\{ \frac{\psi''}{\psi} \bigg(\frac{1}{g_1} + \frac{1}{g_2} \bigg) + \frac12\frac{ \psi'}{\psi}  \bigg[3 \bigg( \frac{1}{g_1} + \frac{1}{g_2} \bigg)' + \frac{g'_1+g'_2}{g_1g_2} \bigg] \nonumber \\
&\quad- \frac12\bigg(\frac{1}{g_1} + \frac{1}{g_2} \bigg) \bigg[ 2(q_1+q_2) + 4\kappa^2 - \frac12\bigg(\frac{1}{g_1} + \frac{1}{g_2}\bigg)^2 + \frac{3}{g_1g_2} \bigg] \nonumber\\
& \quad+ \frac{3}{4} \bigg(\frac{(g'_1)^2}{g_1^3} + \frac{(g'_2)^2}{g_2^3} \bigg) - \frac{1}{4g_1g_2} \bigg(\frac{(g'_1)^2}{g_1} + \frac{(g'_2)^2}{g_2} \bigg)\bigg\} dx.\label{q-diff}
\end{align}
\end{proposition}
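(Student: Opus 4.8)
The plan is to derive both identities from two structural facts about the diagonal Green's function $g(x;\kappa,q)$ of $L=-\partial_x^2+q$: a \emph{static} algebraic identity expressing the potential through $g$, and a \emph{dynamic} conservation law describing the evolution of $g$ under \eqref{forcing}. I would run the computation for smooth $q_j$, so that every differentiation and integration by parts is legitimate; since $g$ and its derivatives depend on the potential continuously in the sense of Proposition~\ref{prop:Green} and Lemma~\ref{L:g deriv}, the general $L^\infty$ statement then follows by approximation. Throughout, the bounds \eqref{g-bdd} and \eqref{g-prime} guarantee that $g_j$, $1/g_j$, and $g_j'$ — hence every coefficient function below — are bounded. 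The static identity comes from representing $g=\psi_-\psi_+/W$ via the Weyl solutions of $-\phi''+(q+\kappa^2)\phi=0$ and the constant Wronskian $W$; a direct differentiation yields
\[
2gg''-(g')^2+1=4(q+\kappa^2)g^2,\qquad\text{equivalently}\qquad q+\kappa^2=\frac{g''}{2g}-\frac{(g')^2}{4g^2}+\frac{1}{4g^2}.
\]

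The identity \eqref{q-diff} is then purely static. I would substitute the second form into $\int(q_1-q_2)\psi\,dx$ and expose an overall factor $(g_1-g_2)$ through elementary algebra: the terms $\tfrac{1}{g_1^2}-\tfrac{1}{g_2^2}=-(g_1-g_2)\tfrac{g_1+g_2}{g_1^2g_2^2}$ carry the factor directly, while the second-derivative part is handled via $g_1''g_2-g_2''g_1=(g_1'g_2-g_1g_2')'$. Integrating by parts transfers the derivatives of $g_j$ onto $\psi$ — which is precisely what generates the $\tfrac{\psi''}{\psi}$ and $\tfrac{\psi'}{\psi}$ terms inside the braces — after which only the bounded quantities $g_j,g_j',q_j,1/g_j,\kappa^2$ remain.

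For the dynamic identity I would use the Lax representation of \eqref{forcing}: with $P=-4\partial_x^3+6q\partial_x+3q'$ one has $\partial_t L=[P,L]+F$, whence the resolvent $R=(L+\kappa^2)^{-1}$ obeys $\partial_t R=-[R,P]-RFR$. Computing the kernel of $[R,P]$ and passing to the diagonal — the one delicate point being the jump of $\partial_x G$ across $x=y$, which I would track by isolating the explicit singular part $G_0$ — gives the conservation law
\[
\partial_t g=\partial_x\bigl[\,2g''-6qg-12\kappa^2 g\,\bigr]-\widetilde F,\qquad \widetilde F(x)=\int G(x,y)F(y)G(y,x)\,dy,
\]
with $\widetilde F$ exactly as in the statement. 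By the static identity the flux may be rewritten free of $g''$, as $-\bigl[\tfrac{1-(g')^2}{g}+2qg+8\kappa^2 g\bigr]$.

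Finally I would assemble \eqref{gronwall}. Writing the distance density as $\tfrac{(g_1-g_2)^2}{2g_1g_2}=\tfrac{g_1}{2g_2}+\tfrac{g_2}{2g_1}-1$ and differentiating in time gives $\partial_t\tfrac{(g_1-g_2)^2}{2g_1g_2}=\tfrac{g_1^2-g_2^2}{2g_1g_2}\bigl(\tfrac{\partial_t g_1}{g_1}-\tfrac{\partial_t g_2}{g_2}\bigr)$. Substituting the conservation law, the $\widetilde F$-part is at once the last line of \eqref{gronwall}; for the dispersive part I integrate by parts in $x$, transferring every spatial derivative of order $\geq 2$ falling on the $g_j$'s onto the weight $\psi$ (thereby producing the $\psi''$ and $\psi'''$ terms) and eliminating persistent $g_j''$ and $g_j'''$ through the static identity and its $x$-derivative. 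Integrating in time from $0$ to $t_0$ kills the boundary term, since $q_1(0)=q_2(0)$ forces $g_1(0)=g_2(0)$. The essential difficulty — and the reason for the particular density chosen — is the algebraic miracle that must then occur: after these integrations by parts and substitutions, all contributions containing the uncontrollable $g_j''$, $g_j'''$, and $q_j'$ must cancel, leaving only the bounded coefficients $A_0,A_1,A_2$ (against $\psi,\psi',\psi''$) and the constant $-\tfrac12$ (against $\psi'''$); simultaneously the full quadratic factor $(g_1-g_2)^2$ must be reconstituted on each such term — starting only from $g_1^2-g_2^2=(g_1-g_2)(g_1+g_2)$ — the sole exception being a remainder linear in $(g_1-g_2)$, which is exactly the $\psi'$-term on the second line of \eqref{gronwall}. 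Verifying these cancellations and pinning down the exact forms of $A_0,A_1,A_2$ is the Green's-function analogue of the algebraic structure underlying \eqref{micro ag}, and it is what renders $\int\tfrac{(g_1-g_2)^2}{2g_1g_2}\psi\,dx$ a Gronwall-compatible distance.
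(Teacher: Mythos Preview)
Your plan matches the paper's proof closely: the static identity you cite is the paper's \eqref{q-equation}/\eqref{g-double-prime}, your conservation law for $g$ is equivalent to \eqref{g-dt} (the constant $6\kappa$ there is immaterial inside the derivative), and the paper likewise derives both \eqref{gronwall} and \eqref{q-diff} by first computing for Schwartz solutions --- writing $\tfrac{(g_1-g_2)^2}{2g_1g_2}=\tfrac{g_1}{2g_2}+\tfrac{g_2}{2g_1}-1$, differentiating in time, integrating by parts against $\psi$, and invoking \eqref{g-double-prime} to eliminate $g_j''$ --- before passing to the limit. The paper, like you, also declines to spell out the ``considerable rearrangement'' behind the algebraic miracle.

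The one place your sketch is too thin is the approximation step. Saying ``run the computation for smooth $q_j$'' and then citing weak-$*$ continuity of $g,g'$ in $q$ is not quite enough: if you mollify the given bounded solution $q_j$ to produce a smooth approximant $q_{j,n}$, the latter is \emph{not} a solution of \eqref{forcing} with the original forcing $F_j$. It solves \eqref{forcing} with a modified forcing that contains commutator-type error terms such as $\psi_n(\varphi_n*q_j^2)-q_{j,n}^2$, and one of these errors even appears differentiated. The paper invests real effort here: it mollifies in both space and time, multiplies by a decaying weight $\operatorname{sech}(x/n^3)$ to land in Schwartz class at each fixed time, and proves a separate lemma showing that the induced $\widetilde F_{j,n}$ converge pointwise to $\widetilde F_j$ with uniform bounds (the differentiated error is handled by an integration by parts using the $x\leftrightarrow y$ symmetry of $G$ and \eqref{G-prime-bdd}). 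This is not a conceptual obstruction to your plan, but it is where most of the analytic work in the proposition actually lies, and your proposal should acknowledge that the approximants solve a \emph{perturbed} equation whose forcing must be controlled.
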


We will first establish Proposition~\ref{prop:identities} for Schwartz solutions and then employ an approximation argument to treat the case of merely bounded solutions. We start by recalling some known results regarding the diagonal Green's function; see \cite[Lemma~2.14]{KMV} and \cite[Lemma~2.6]{KV19}:

\begin{lemma}
Fix $q \in \mathcal{S}(\R)$ and $\kappa^2 \geq 4 \|q\|_{L^\infty}$. Then $g(x) -\frac{1}{2\kappa}\in \mathcal{S}(\R)$ and the following equations hold:
\begin{align}
q  &= \Big[ \frac{g'}{2g} \Big]' + \Big[ \frac{g'}{2g} \Big]^2 + \frac{1}{4 g^2 } - \kappa^2,\label{q-equation}\\
g'' &= 2 [q+\kappa^2]g + \frac{(g')^2}{2g} - \frac{1}{2g}, \label{g-double-prime} \\
g''' &= 2[q g]' + 2 q g' + 4 \kappa^2 g' \label{elliptic}.
\end{align}
Also, if $f \in \mathcal{S}(\R)$, then
\begin{equation}\label{G-identity}
\int G(x,y;q)[-f''' + 2 q f' + 2 (q f)' + 4 \kappa^2 f'](y) G(y,x;q) \, dy = 2  f'(x)g(x) - 2 f(x) g'(x). 
\end{equation}
\end{lemma}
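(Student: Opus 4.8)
The plan is to represent the diagonal Green's function through the Weyl solutions of $-\psi''+q\psi=-\kappa^2\psi$ and to reduce every assertion to one elementary fact: a product of two solutions of a one-dimensional Schr\"odinger equation lies in the kernel of the associated third-order operator. Since $q\in\mathcal S(\R)$ and $\kappa>0$, there are solutions $\psi_\pm$ of $\psi''=(q+\kappa^2)\psi$ with $\psi_\pm(y)\sim e^{\mp\kappa y}$ as $y\to\pm\infty$; writing $\psi_\pm=e^{\mp\kappa y}m_\pm$ and solving the resulting Volterra equation shows, by standard scattering theory, that $m_\pm-1$ together with all its derivatives decays faster than any polynomial. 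As $G(x,y)=\psi_-(x_<)\psi_+(x_>)/W$ with constant Wronskian $W=\psi_-'\psi_+-\psi_-\psi_+'\to 2\kappa$, the diagonal $g=\psi_+\psi_-/W$ satisfies $g-\tfrac1{2\kappa}\in\mathcal S(\R)$. (Alternatively, one can extract the Schwartz property from the uniformly convergent resolvent series of Proposition~\ref{prop:Green}, differentiated term by term.)

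For the scalar identities I would first record a quadratic first integral. Differentiating $g=\psi_+\psi_-/W$ twice and using $\psi_\pm''=(q+\kappa^2)\psi_\pm$ gives $g''=2(q+\kappa^2)g+2\psi_+'\psi_-'/W$, and a short computation exploiting the constant Wronskian yields
\begin{equation*}
-2gg''+(g')^2+4(q+\kappa^2)g^2=1 .
\end{equation*}
Solving this for $g''$ is precisely \eqref{g-double-prime}; inserting the resulting expression for $g''/(2g)$ into the right-hand side of \eqref{q-equation} collapses it to $q$, proving \eqref{q-equation}; and differentiating the first integral and dividing by the strictly positive $2g$ (recall \eqref{g-bdd}) produces $g'''=2q'g+4(q+\kappa^2)g'$, which rearranges to \eqref{elliptic}. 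Thus all three ODEs descend from a single algebraic relation.

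The substantive identity is \eqref{G-identity}. Write $\mathcal P:=-\partial_y^3+4(q+\kappa^2)\partial_y+2q'$, so that the bracket in \eqref{G-identity} is exactly $\mathcal P f$, and note that $\mathcal P$ is formally anti-self-adjoint. Since $G(y,x)=G(x,y)$, the left-hand side equals $\int G(x,y)^2(\mathcal P f)(y)\,dy$, and integrating by parts moves $\mathcal P$ onto $\Phi(y):=G(x,y)^2$ at the cost of a sign,
\begin{equation*}
\int G(x,y)^2(\mathcal P f)(y)\,dy=-\int(\mathcal P\Phi)(y)\,f(y)\,dy,
\end{equation*}
the exponential decay in \eqref{G-upper} killing the boundary terms at infinity. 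On either side of the diagonal $y\mapsto G(x,y)$ solves $\phi''=(q+\kappa^2)\phi$, so there $\Phi$ is a product of two such solutions and $\mathcal P\Phi=0$. Consequently $\mathcal P\Phi$ is a distribution supported at $y=x$, and the identity reduces to computing its singular part.

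This diagonal bookkeeping is the step demanding the most care, and the one where I expect sign and normalization errors to be easiest to commit. From $-\partial_y^2 G+(q+\kappa^2)G=\delta(y-x)$ one reads off that $G$ is continuous across $y=x$ while $\partial_y G$ jumps by $-1$; together with $\partial_y G(x,x^+)+\partial_y G(x,x^-)=g'(x)$ (immediate from the product formula and $G(x,x)=g$) this gives the one-sided values $\partial_y G(x,x^\pm)=\tfrac12(g'\mp1)$. Feeding these into $\Phi'=2G\,\partial_y G$ and $\Phi''=2(\partial_y G)^2+2(q+\kappa^2)G^2-2g\,\delta(y-x)$ and tracking the jumps shows that only the $-\partial_y^3\Phi$ term is singular and that
\begin{equation*}
\mathcal P\Phi=2g(x)\,\delta'(y-x)+2g'(x)\,\delta(y-x).
\end{equation*}
Pairing against $-f$ then returns $2f'(x)g(x)-2f(x)g'(x)$, which is \eqref{G-identity}. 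Everything outside this distributional identification is either classical scattering theory or the single algebraic first integral, so I would concentrate the careful writing on getting both the $\delta$ and the $\delta'$ coefficients exactly right.
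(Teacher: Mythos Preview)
Your argument is correct. The paper does not actually prove this lemma; it simply cites \cite[Lemma~2.14]{KMV} and \cite[Lemma~2.6]{KV19} as known results. Your route via the Weyl solutions $\psi_\pm$, the quadratic first integral $-2gg''+(g')^2+4(q+\kappa^2)g^2=1$, and the distributional computation of $\mathcal P\Phi$ at the diagonal is the standard derivation of these identities and is essentially what one finds in those references. The diagonal bookkeeping you flag as delicate is indeed the place errors typically arise, and your jump computations check out: $[\Phi']=-2g(x)$ and $[\Phi''_{\mathrm{cl}}]=-2g'(x)$, so only $-\Phi'''$ contributes singularly and $\mathcal P\Phi = 2g(x)\,\delta'(\cdot-x)+2g'(x)\,\delta(\cdot-x)$, which after pairing with $-f$ gives $2f'g-2fg'$.
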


Next we extend known formulas for the dynamics of the diagonal Green's function to the case of forced KdV.

\begin{lemma}
Let $q(t)\in \mathcal{S}(\R)$ be a Schwartz solution of the forced KdV equation \eqref{forcing} with smooth spacetime forcing $F\in L^\infty(\R^2)$ and fix $\kappa^2 \geq 4\|q\|_{L^\infty_{t,x}}$. Then the following hold:
\begin{align}
\frac{d}{dt} g 
&= \Big\{ - g'' + \frac{3(g')^2}{2g} - \frac{3}{2g} - 6 \kappa^2 g + 6 \kappa\Big\}' - \widetilde{F}, \label{g-dt}\\
\frac{d}{dt} \frac{1}{2 g} 
& = \Big\{ - \Big(\frac{1}{2g}\Big)'' + \frac{3(g')^2}{4g^3} + \frac{1}{4 g^3} - \frac{3\kappa^2}{g} + 4 \kappa^3 \Big\}' + \frac{ \widetilde{F}}{2g^2}, \label{one-g-dt}
\end{align}
where the modified forcing $\widetilde F$ is defined by
$$\widetilde{F}(t,x) := \int G(x,y; \kappa, q(t)) F(t,y) G(y,x;\kappa,q(t)) \, dy$$
and belongs to $L^\infty(\R^2)$.
\end{lemma}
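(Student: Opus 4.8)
The plan is to exploit the elementary operator identity $\dt R = -R(\dt q)R$ together with the algebraic identity \eqref{G-identity}, thereby reducing both claims to finite computations with the stationary equations \eqref{g-double-prime} and \eqref{elliptic}.

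First I would record the evolution of the resolvent kernel. Since $L=-\dx^2+q(t)$ and $R=(L+\kappa^2)^{-1}$ depend on $t$ only through $q$, and $\dt L=\dt q$ is a bounded, smooth multiplication operator, differentiating the identity $R(L+\kappa^2)=\mathrm{Id}$ gives $\dt R=-R(\dt q)R$. Passing to integral kernels and restricting to the diagonal $y=x$, this reads
\[
\dt g(x) = -\int G(x,y)\,(\dt q)(y)\,G(y,x)\,dy.
\]
The differentiation under the integral is justified by the Schwartz regularity of $q(t)$, the joint continuity of $G$, and the exponential decay \eqref{G-upper}; alternatively one differentiates the Neumann series \eqref{greens} termwise exactly as in the proof of Proposition~\ref{prop:Green}. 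Substituting $\dt q=(-q'''+6qq')+F$ splits the right-hand side into a KdV part and a forcing part. The forcing part is exactly $-\widetilde F(x)$ by definition, and \eqref{G-upper} yields $|\widetilde F|\le(\tfrac{3}{4\kappa})^2\tfrac{2}{\kappa}\|F\|_{L^\infty}$, so $\widetilde F\in L^\infty(\R^2)$ as claimed.

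The key algebraic step is to recast the KdV nonlinearity in the form handled by \eqref{G-identity}. Writing $\mathcal P f:=-f'''+2qf'+2(qf)'+4\kappa^2 f'$, a direct expansion shows $\mathcal P(q-2\kappa^2)=-q'''+6qq'$. Applying \eqref{G-identity} with $f=q-2\kappa^2$ then gives
\[
\int G(x,y)\,(-q'''+6qq')(y)\,G(y,x)\,dy = 2q'g-2(q-2\kappa^2)g' = 2q'g-2qg'+4\kappa^2 g',
\]
so the KdV contribution to $\dt g$ equals $-2q'g+2qg'-4\kappa^2 g'$. It then remains to verify that this agrees with the claimed flux derivative: differentiating $H:=-g''+\tfrac{3(g')^2}{2g}-\tfrac{3}{2g}-6\kappa^2 g+6\kappa$ and substituting $g'''$ from \eqref{elliptic} and $g''$ from \eqref{g-double-prime} causes all of the rational-in-$g$ terms to cancel, leaving precisely $H'=-2q'g+2qg'-4\kappa^2 g'$. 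This establishes \eqref{g-dt}.

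For \eqref{one-g-dt} I would simply use $\tfrac{d}{dt}\tfrac{1}{2g}=-\tfrac{1}{2g^2}\dt g$, which is legitimate since the lower bound $g\ge\tfrac{1}{4\kappa}$ from \eqref{g-bdd} keeps $1/g$ smooth and bounded. Inserting \eqref{g-dt} produces the term $\tfrac{\widetilde F}{2g^2}$ together with $-\tfrac{1}{2g^2}H'$, and a second routine differentiation-and-substitution (again via \eqref{g-double-prime} and \eqref{elliptic}) identifies $-\tfrac{1}{2g^2}H'$ with the $x$-derivative displayed in \eqref{one-g-dt}. I expect the only genuinely nontrivial point to be the first algebraic observation, namely the clean reduction of the cubic nonlinearity to $\mathcal P(q-2\kappa^2)$ so that \eqref{G-identity} applies; once this is in place, everything reduces to bookkeeping with the stationary Green's-function identities.
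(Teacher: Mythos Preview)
Your argument is correct and follows the same route as the paper: compute $\dt g=-\int G(\dt q)G$ from the resolvent identity, substitute the equation, apply \eqref{G-identity} to the KdV part, and then rewrite using \eqref{g-double-prime} and \eqref{elliptic}, with \eqref{one-g-dt} following by the chain rule. One small caveat: \eqref{G-identity} is stated only for $f\in\mathcal S(\R)$ while $q-2\kappa^2$ is not Schwartz, so you should either apply it with $f=q$ (which produces $-q'''+6qq'+4\kappa^2 q'$ on the left) and handle the residual $-4\kappa^2 q'$ via the translation identity $\int G\,q'\,G=-g'$, or simply remark that the identity extends to $f=q+c$; the paper is equally terse at this step.
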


\begin{proof}
The convergence of the integral defining $\widetilde F$ and the fact that $\widetilde F\in L^\infty(\R^2)$ follow immediately from \eqref{G-upper}.

From the resolvent identity
\begin{align*}
R(\kappa;q(t+h)) - R(\kappa;q(t)) = R(\kappa;q( t+h )) \big[ q(t) - q(t +h)\big] R(\kappa;q(t))
\end{align*}
and \eqref{G-identity}, we get
\begin{align*}
\frac{d}{dt} g(x; q(t)) & = - \int G(x,y;q(t)) \frac{d}{dt}q(t,y) G(y,x;q(t)) \, dy \\
& = - \int G(x,y;q(t))\big[ - q''' + 6 q q' + F \big](t,y) G(y,x;q(t)) \, dy \\
& = 2 q(x) g'(x;q(t)) - 2 q'(x) g(x;q(t)) - 4 \kappa^2 g'(x;q(t)) - \widetilde{F}(t,x).
\end{align*}
The representation \eqref{g-dt} follows from the above and the identities \eqref{g-double-prime} and \eqref{elliptic}.
Lastly, \eqref{one-g-dt} follows from \eqref{g-dt} and the chain rule. 
\end{proof}

\begin{proof}[Proof of Proposition~\ref{prop:identities} for Schwartz solutions] For notational simplicity, we forgo writing the time and space dependence for the Schwartz solutions $q_1,q_2$, their Green's functions $g_1,g_2$, and their forcing terms. 

In order to establish \eqref{gronwall}, we must compute the evolution of the quantity
$$
\frac{(g_1 - g_2)^2}{2 g_1 g_2} = \Big( \frac{g_1}{2 g_2}  + \frac{g_2}{2g_1} - 1\Big).
$$
From \eqref{g-dt} and \eqref{one-g-dt}, we have
\begin{multline*}
\frac{d}{dt} \Big( \frac{ g_1 }{2 g_2 } - \frac12 \Big)  = \frac{1}{2 g_2  } \Big\{ -  g_1''  +  \frac{3( g_1' )^2}{  2g_1 } - \frac{3}{ 2g_1 } - 6\kappa^2  g_1   \Big\}' - \frac{\widetilde{F_1}}{2g_2}  \\
+  g_1   \Big\{ - \Big[\frac{1}{2  g_2 }\Big]'' +  \frac{3(g_2' )^2}{4g_2^3} + \frac{1}{4g_2^3} - \frac{3\kappa^2}{ g_2 }  \Big\}' + \frac{g_1 \widetilde{F_2}}{2g_2^2} .
\end{multline*}
After considerable rearrangement this yields
\begin{align}
\frac{d}{dt} &\frac{(g_1-g_2)^2}{2g_1g_2} \notag\\
&= \frac12 \bigg( \frac{(g_1-g_2)^2}{2g_1g_2} \bigg)''' +\frac32 \bigg\{ \frac{(g_1-g_2)^2}{2g_1g_2} \bigg[ -\frac{g_1'}{g_2} - \frac{g_2'}{g_1} + 2\frac{g_1'}{g_1} + 2 \frac{g_2'}{g_2}\bigg]\bigg\}'' \notag\\
&\quad+ \frac32 \bigg\{ \frac{(g_1-g_2)^2}{2g_1g_2} \bigg[ 3 \frac{(g_1')^2}{g_1^2} +  3\frac{(g_2')^2}{g_2^2}- 4\kappa^2 + \frac{1}{g_1g_2} + \frac{g_1'g_2'}{g_1g_2} + \frac{(g_1'-g_2')^2}{g_1g_2}\notag\\
&  \phantom{xxxxxxxxxxxx}-2 g_1'g_2' \bigg( \frac{1}{g_1^2} + \frac{1}{g_2^2} \bigg) + \frac{g_1''}{g_2} + \frac{g_2''}{g_1} - \frac{g_1''}{g_1} - \frac{g_2''}{g_2} \bigg] \bigg\}' \notag\\
&\quad - \frac32 \frac{(g_1-g_2)^2}{2g_1g_2} \bigg\{\frac{g_1'}{g_1^3} + \frac{g_2'}{g_2^3} - \frac{g_1'g_2'}{g_1g_2} \bigg[\frac{g_1'}{g_1} + \frac{g_2'}{g_2}\bigg] + \frac{g_1''g_2' + g_1' g_2''}{g_1g_2} \bigg\}\notag\\
&\quad - \frac32 \bigg\{\frac{(g_1-g_2)(g_1''-g_2'')}{g_1g_2} \bigg\}'
- \frac{g_1^2-g_2^2}{2g_1g_2} \bigg( \frac{\widetilde{F_1}}{g_1} - \frac{\widetilde{F_2}}{g_2}\bigg). \label{micro gem}
\end{align}

 Using \eqref{g-double-prime} to eliminate $g''_1$ and $g''_2$, we obtain
\begin{align*}
\frac{d}{dt} &\frac{(g_1-g_2)^2}{2g_1g_2} \\
& = \frac12 \bigg( \frac{(g_1-g_2)^2}{2g_1g_2} \bigg)''' +\frac32 \bigg( \frac{(g_1-g_2)^2}{2g_1g_2} A_2 \bigg)'' + \frac32 \bigg( \frac{(g_1-g_2)^2}{2g_1g_2}  A_1 \bigg)' + \frac32 \frac{(g_1-g_2)^2}{2g_1g_2} A_0 \\
&\quad - \frac32 \bigg\{ \frac{g_1-g_2}{g_1g_2} \bigg[ 2q_1g_1 - 2q_2 g_2 + \frac{(g'_1)^2}{2g_1} - \frac{(g'_2)^2}{2g_1} \bigg]\bigg\}'  - \frac{g_1^2-g_2^2}{2g_1g_2} \bigg( \frac{\widetilde{F}_1}{g_1} - \frac{\widetilde{F}_2}{g_2}\bigg)
\end{align*}
with $A_0,A_1,A_2$ as given in Proposition~\ref{prop:identities}. The equation \eqref{gronwall} follows from integrating against a test function $\psi\in C_c^\infty(\R)$ and then integrating by parts.

We now prove \eqref{q-diff}. From \eqref{q-equation} we have
\begin{align*}
q_1-q_2 & = \bigg(\frac{g'_1}{2g_1} - \frac{g'_2}{2g_2}\bigg)' + \bigg( \frac{g'_1}{2g_1}\bigg)^2  -  \bigg( \frac{g'_2}{2g_2}\bigg)^2 + \frac{1}{4g_1^2} - \frac{1}{4g_2^2}, 
\end{align*}
which we rewrite as follows
\begin{multline*}
q_1-q_2 = \frac14 \bigg\{ (g_1-g_2)\bigg(\frac{1}{g_1} + \frac{1}{g_2} \bigg)\bigg\}'' + \frac18 \bigg\{ (g_1-g_2)\bigg[- 3 \bigg(\frac{1}{g_1} + \frac{1}{g_2} \bigg)' - \frac{g'_1+g'_2}{g_1g_2} \bigg] \bigg\}' \\
+\frac18 (g_1-g_2) \bigg\{ -\bigg( \frac{1}{g_1} + \frac{1}{g_2} \bigg) \bigg( \frac{g''_1}{g_1} + \frac{g''_2}{g_2} + \frac{2}{g_1g_2} \bigg)  + 2 \bigg( \frac{(g'_1)^2}{g_1^3} + \frac{(g'_2)^2}{g_2^3}\bigg)\bigg\}.
\end{multline*}
Using \eqref{g-double-prime} to eliminate $g''_1$ and $g''_2$, we find that
\begin{align*}
q_1-q_2 = \frac14 \bigg\{ (g_1-g_2)\bigg(\frac{1}{g_1} + \frac{1}{g_2} \bigg)\bigg\}'' + \frac18 \bigg\{ (g_1-g_2)\bigg[- 3 \bigg(\frac{1}{g_1} + \frac{1}{g_2} \bigg)' - \frac{g'_1+g'_2}{g_1g_2} \bigg]\bigg\}' \\
+\frac18 (g_1-g_2) \bigg\{ -\bigg( \frac{1}{g_1} + \frac{1}{g_2} \bigg) \bigg[2(q_1+q_2) + 4\kappa^2  - \frac12\bigg(\frac{1}{g_1} + \frac{1}{g_2}\bigg)^2 + \frac{3}{g_1g_2} \bigg] \\
+ \frac32 \bigg( \frac{(g'_1)^2}{g_1^3} + \frac{(g'_2)^2}{g_2^3}\bigg) -\frac{1}{2g_1g_2} \bigg(\frac{(g'_1)^2}{g_1} + \frac{(g'_2)^2}{g_2} \bigg)\bigg\}, 
\end{align*}
from which \eqref{q-diff} follows by integrating against the Schwartz function $\psi$ and then integrating by parts. 
\end{proof}

\begin{proof}[Proof of Proposition~\ref{prop:identities} for bounded solutions] Our argument will be to mollify the solutions $q_1,q_2 \in L^\infty((-T,T) \times \R)$ so that they are smooth functions of spacetime and of Schwartz class at each fixed time.  Inevitably, this will lead to changes in the forcing terms.  Ultimately, we will show that our mollified sequences of solutions, their diagonal Green's functions, and the forcing terms converge sufficiently well to ensure that \eqref{gronwall} and \eqref{q-diff} carry over from Schwartz solutions to the case of merely bounded solutions.

Let us now explain how we mollify a bounded solution $q$ to \eqref{forcing} with smooth and bounded forcing term $F$; the procedure is applied equally to $q_1$ and $q_2$.  Given a non-negative $\phi \in C_c^\infty(\R)$ with $\supp \phi \subset[-1,1]$ and $\int \phi(x) \, dx = 1$, we define
\begin{gather*}
\phi_n(x) = n \phi(nx), \qquad \varphi_n(t,x) = \phi_n(t) \phi_n(x), \qquad \psi_n(x) = \operatorname{sech}\Big(\frac{x}{n^3}\Big), \nonumber\\
q_n(t,x) = \psi_n(x) (\varphi_n \ast_{t,x} q )(t,x) = \psi_n(x) \int_{\R\times \R} \phi_n(t-\tau) \phi_n(x-y) q(\tau,y) \, d\tau \, dy .
\end{gather*}
By direct computation, we see that $q_n$ solves the following forced KdV equation
\begin{align}\label{7:13}
\tfrac{d}{dt} q_n &=  - \psi_n (\varphi_n''' \ast_{t,x} q )+ 3 \psi_n (\varphi_n' \ast_{t,x} q^2 )+ \psi_n (\varphi_n \ast_{t,x} F)\notag \\
 & = -q'''_n + 6 q_n q'_n + E_{1,n} + E_{2,n} + 3E_{3,n}'
\end{align}
where 
\begin{align*}
E_{1,n} & = \psi_n (\varphi_n \ast_{t,x} F),\\
E_{2,n} & = \psi_n'''(\varphi_n \ast_{t,x}q) + 3 \psi_n'' (\varphi_n' \ast_{t,x} q) + 3 \psi'_n \big[ \varphi_n'' \ast_{t,x} q - \varphi_n \ast_{t,x} q^2 \big],  \\
E_{3,n} & = \psi_n (\varphi_n \ast_{t,x} q^2) - q_n^2.
\end{align*}

Regarding the convergence of $q_n$ to $q$, it will suffice for us show that
\begin{align}\label{bullet 1}
\| q_n \|_{L^\infty_{t,x}} \leq \| q\|_{L^\infty_{t,x}} \qtq{and} q_n(t) \weakstar q (t) \text{ in $L^\infty(\R)$ for each $t\in(-T,T)$}.
\end{align}
The former claim is elementary; it guarantees that the diagonal Green's functions of $q_n$ are defined for the same range of $\kappa$ as we would employ for $q$.  It will also allow us to apply the dominated convergence theorem later because
\begin{align}\label{bullet 1'}
q_n(t,x) \to q (t,x) \quad \text{pointwise a.e. on $(-T,T)\times\R$ as $n\to \infty$}.
\end{align}

The second claim in \eqref{bullet 1} warrants a little explanation.  Given $t\in (-T,T)$ and $f\in L^1(\R)$, we have
\begin{align*}
\bigl\langle q_n(t) , f \bigr\rangle = \bigl\langle [\phi_n *_t q](t)  ,  \phi_n *_x(\psi_n f) \bigr\rangle .
\end{align*}
As the solution $q$ is weak-$*$ continuous, it follows that $[\phi_n *_t q](t) \weakstar q(t)$ in $L^\infty(\R)$.  The claim then follows from this and the fact that $ \phi_n *_x(\psi_n f)\to f$ in $L^1(\R)$.

By Proposition~\ref{prop:Green} and Lemma~\ref{L:g deriv}, \eqref{bullet 1} guarantees the convergence of the diagonal Green's functions $g_n(t,x) = g_n(x;\kappa, q_n(t))$ and their derivatives.  Specifically,  for $t\in (-T,T)$ we have
\begin{align}\label{bullet 2}
g_n (t) \to g(t) \qtq{and} g_n' (t) \to g'(t) \quad \text{pointwise as $n\to \infty$}.
\end{align}
Moreover, by \eqref{g-bdd} and \eqref{g-prime}, we have the uniform bounds
\begin{align}\label{bullet 3}
\tfrac1{4\kappa}\leq g_n(t,x) \leq  \tfrac3{4\kappa}\qtq{and} |g_n' (t,x)|\leq \tfrac 12.
\end{align}

As $F$ is bounded and continuous, we have that
\begin{align}\label{e1}
\|E_{1,n}\|_{L^\infty_{t,x}} \leq \|F\|_{L^\infty_{t,x}} \quad \text{and}\quad  E_{1,n}(t,x) \to F(t,x) \text{ pointwise as } n \to \infty.
\end{align}
Regarding the error term $E_{2,n}$, we note that for $\l=0,1,2$,
$$\|\varphi^{(\l)}_n \ast_{t,x} q \|_{L^\infty_{t,x}} \les n^{\l} \|q\|_{L^\infty_{t,x}}, \qquad \| \varphi_n \ast_{t,x} q^2\|_{L^\infty_{t,x}} \les \|q\|_{L^\infty_{t,x}}^2,$$
and that $\|\psi^{(\ell)}_n \|_{L^\infty} \les n^{-3\ell}$ for $\ell=1,2,3$. From this, it follows easily that 
\begin{align}\label{e2}
\|E_{2,n}\|_{L^\infty_{t,x}} \les \|q\|_{L^\infty_{t,x}} + \|q\|_{L^\infty_{t,x}}^2 \qtq{and} E_{2,n}(t,x) \to 0 \text{ pointwise as $n\to \infty$}.
\end{align}
The same arguments also yield that 
\begin{align}\label{e3}
\|E_{3,n}\|_{L^\infty_{t,x}} \les \|q\|_{L^\infty_{t,x}}^2  \qtq{and}  E_{3,n}(t,x) \to 0 \text{ pointwise as $n\to \infty$} .
\end{align}
We caution the reader that this last error term appears differentiated in \eqref{7:13}.

The error terms $E_{1,n}$, $E_{2,n}$, and $E_{3,n}$ do not contribute directly to the identities \eqref{gronwall} and \eqref{q-diff}, but only through 
\begin{align*}
\widetilde{F}_n(t,x) = \int_\R G_n(x,y) \big[  E_{1,n} + E_{2,n} + 3E_{3,n}'\big](t,y) G_n(y,x) \, dy.
\end{align*}
Here $G_n(x,y)=G(x,y;\kappa, q_n(t))$.  Our next lemma shows that $\widetilde{F}_n(t,x)$ converges pointwise to
\begin{align*}
\widetilde{F}(t,x) = \int_\R G(x,y) F(t,y) G(y,x) \, dy.
\end{align*}

\begin{lemma}\label{lemma:error-forcing}
The functions $\widetilde F_n$ are bounded uniformly in $n,t,x$ and 
\begin{align*}
\widetilde{F}_n(t,x) \to \widetilde F(t,x) \quad \text{pointwise as $n\to\infty$}. 
\end{align*}
\end{lemma}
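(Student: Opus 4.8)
The plan is to treat $\widetilde F_n$ as a sum of three contributions, one for each of the error terms $E_{1,n}$, $E_{2,n}$, and $3E_{3,n}'$ appearing in \eqref{7:13}, and to analyze each by a dominated convergence argument built on the Green's function estimates \eqref{G-upper} and \eqref{G-prime-bdd}. Throughout, $t$ and $x$ are held fixed, so that by \eqref{bullet 1} the hypotheses \eqref{qn to q} hold for the sequence $q_n(t)$; this places us squarely in the setting of Proposition~\ref{prop:Green} and Lemma~\ref{L:g deriv}, and in particular grants the pointwise convergence $G_n(x,y)\to G(x,y)$ together with the symmetry $G_n(x,y)=G_n(y,x)$.

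For the two undifferentiated errors, I would use \eqref{G-upper}, \eqref{e1}, and \eqref{e2} to bound
$$
\bigl|G_n(x,y)\,[E_{1,n}+E_{2,n}](t,y)\,G_n(y,x)\bigr| \les \Bigl(\tfrac{3}{4\kappa}\Bigr)^2 e^{-\kappa|x-y|}\,\bigl(\|F\|_{L^\infty_{t,x}} + \|q\|_{L^\infty_{t,x}} + \|q\|_{L^\infty_{t,x}}^2\bigr).
$$
The right-hand side is integrable in $y$ and independent of $n$, which yields the uniform boundedness of this part of $\widetilde F_n$. For the limit, Proposition~\ref{prop:Green} gives $G_n(x,y)\to G(x,y)$ and $G_n(y,x)\to G(y,x)$, while \eqref{e1} gives $E_{1,n}\to F$ and \eqref{e2} gives $E_{2,n}\to 0$ pointwise; dominated convergence then shows that the $E_{1,n}$-contribution converges to $\widetilde F(t,x)$ and the $E_{2,n}$-contribution converges to $0$.

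The main obstacle is the differentiated error $3E_{3,n}'$: differentiating $E_{3,n}$ directly would cost a factor growing in $n$, so a naive bound fails. I would instead integrate by parts in $y$, transferring the derivative onto the product $G_n(x,y)G_n(y,x)=G_n(x,y)^2$. The boundary terms vanish because $G_n(x,y)^2$ decays exponentially in $y$ by \eqref{G-upper} while $E_{3,n}$ is bounded, and the manipulation is legitimate since $G_n(x,y)^2$ is absolutely continuous in $y$ (Lemma~\ref{L:g deriv}, via symmetry) and $E_{3,n}$ is smooth and rapidly decaying. This converts the contribution, up to a sign, into $\int \partial_y[G_n(x,y)^2]\,E_{3,n}(t,y)\,dy$. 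Now the key is that $\partial_y[G_n(x,y)^2]=2G_n(x,y)\,\partial_y G_n(x,y)$, and the derivative bound \eqref{G-prime-bdd} (combined with the symmetry of $G$ to control $\partial_y$ in terms of $\partial_x$) gives $|\partial_y[G_n(x,y)^2]|\les e^{-\kappa|x-y|}$ uniformly in $n$. Together with $\|E_{3,n}\|_{L^\infty_{t,x}}\les\|q\|_{L^\infty_{t,x}}^2$ from \eqref{e3}, this provides both the uniform bound and an $n$-independent, $y$-integrable dominating function.

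Finally, since $E_{3,n}(t,y)\to 0$ pointwise (again by \eqref{e3}) while $\partial_y[G_n(x,y)^2]$ remains dominated by $e^{-\kappa|x-y|}$, dominated convergence forces the $E_{3,n}'$-contribution to $0$. Summing the three pieces shows that $\widetilde F_n$ is bounded uniformly in $n,t,x$ and that $\widetilde F_n(t,x)\to\widetilde F(t,x)$ pointwise, as claimed. Note that the pointwise convergence of Green's function derivatives \eqref{Gn-prime-conv} is not actually needed here: because $E_{3,n}\to 0$, only the uniform derivative bound \eqref{G-prime-bdd} is required for the differentiated term.
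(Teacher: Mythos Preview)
Your proposal is correct and follows essentially the same approach as the paper: split into the three error contributions, handle $E_{1,n}$ and $E_{2,n}$ directly via \eqref{G-upper} and dominated convergence, and treat $3E_{3,n}'$ by integrating by parts onto $G_n(x,y)^2$ so that \eqref{G-prime-bdd} absorbs the derivative. Your justification for the vanishing boundary terms (exponential decay of $G_n(x,y)^2$ against bounded $E_{3,n}$) is in fact slightly cleaner than the paper's, which instead invokes the spatial decay of $E_{3,n}$ coming from the $\psi_n$ factor; and your closing remark that \eqref{Gn-prime-conv} is unnecessary here is accurate and consistent with what the paper actually uses.
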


\begin{proof}
To handle the derivative appearing on $E_{3,n}$, we integrate by parts:
\begin{align}\label{6:39}
\int G_n(x,y) E_{3,n}'(y) G_n(y,x) \, dy 
&= -2 \int  \Big[\frac{\partial}{\partial y} G_n(y,x) \Big] E_{3,n}(y) G_n(x,y) \, dy. 
\end{align}
Here, we implicitly used the $x\leftrightarrow y$ symmetry of the Green's function and the fact the $E_{3,n}$ converges to zero at spatial infinity due to the presence of $\psi_n$ factors.

From the bounds \eqref{G-upper} and \eqref{G-prime-bdd}, we have 
\begin{align}\label{6:46}
|G_n(x,y) | \leq \tfrac{3}{4\kappa} e^{-\frac\kappa2|x-y|} \qtq{and} \Big|\tfrac{d}{dx} G_n(x,y)\Big| \leq \tfrac{3}{4} e^{-\frac\kappa2|x-y|}.
\end{align}
These bounds hold for all $n\in \NB$ because of \eqref{bullet 1}.

The uniform boundedness of $\widetilde F_n$ follows from \eqref{6:39}, \eqref{6:46}, and the uniform boundedness of the error terms $E_{1,n}$, $E_{2,n}$, and $E_{3,n}$ observed in \eqref{e1}, \eqref{e2}, and \eqref{e3}, respectively.

The fact that $\widetilde F_n$  converges pointwise to $\widetilde F$ follows from the dominated convergence theorem, \eqref{e1}, \eqref{e2}, \eqref{e3}, and  \eqref{6:46}.
\end{proof}

Having gathered all the necessary convergence results, we are now ready to complete the proof of Proposition~\ref{prop:identities}.  We do this by sending $n\to \infty$ in the identities \eqref{gronwall} and \eqref{q-diff} satisfied by the Schwartz solutions $q_n$ to the forced KdV \eqref{7:13}.  For all terms in \eqref{gronwall}, we may apply the dominated convergence theorem using \eqref{bullet 1'}, \eqref{bullet 2}, \eqref{bullet 3}, and Lemma~\ref{lemma:error-forcing}.  

This argument also applies to the terms appearing on the right-hand side of \eqref{q-diff}. It does not apply to the left-hand side of \eqref{q-diff} because we are not guaranteed that $q_{j,n}(t)$ converges pointwise a.e. to $q_j(t)$ for \emph{every} $t$.  This is remedied by \eqref{bullet 1}.
\end{proof}

We are now ready to prove our uniqueness result.

\begin{proof}[Proof of Theorem~\ref{th:unique}] Due to time-translation invariance, it suffices to prove uniqueness of solutions on intervals of the form $(-T,T)$.

Consider two solutions $q_1,q_2:(-T,T)\to L^\infty(\R)$ to \eqref{kdv} in the sense of Definition~\ref{def:solution} with the same initial data $q_1(0) = q_2(0)$.  Due to the time-reversal symmetry $q(t,x)\mapsto q(-t,-x)$ of the equation, it suffices to show uniqueness forward in time. 

Let us fix  $\kappa^2 \geq 4 \max\{\|q_1\|_{L^\infty_{t,x}}, \|q_2\|_{L^\infty_{t,x}}\}$. We first prove that the corresponding diagonal Green's functions $g_1$ and $g_2$ agree at any fixed time $t_0\in (0,T)$.  We will then deduce the equality of $q_1(t_0)$ and $q_2(t_0)$. 

From Proposition~\ref{prop:identities}, the identity \eqref{gronwall} holds for $q_1,q_2$ with $F_1=F_2 =\widetilde{F}_1 =\widetilde{F}_2= 0$, and $\psi_R (x) : = \operatorname{sech}(\frac{x}{R})$ with $R\geq 1$.  Moreover, from \eqref{g-bdd} and \eqref{g-prime}, we have the following estimates for the quantities appearing in \eqref{gronwall}:
\begin{gather*}
\big\| \tfrac{\psi_R'''}{\psi_R} \big\|_{L^\infty_{t,x}}  \les \tfrac{1}{R^3}, \quad \big\| \tfrac{\psi_R''}{\psi_R} A_2 \big\|_{L^\infty_{t,x}} \les \tfrac{\kappa}{R^2}, \quad
\big\| \tfrac{\psi'_R}{\psi_R} A_1 \big\|_{L^\infty_{t,x}}  \les \tfrac{\kappa^2 + \|q_1\|_{L^\infty_{t,x}} + \|q_2\|_{L^\infty_{t,x}}}{R} \les \tfrac{\kappa^2}{R}, \\
\|A_0\|_{L^\infty_{t,x}}  \les \kappa^3 + \kappa\big[\|q_1\|_{L^\infty_{t,x}} + \|q_2\|_{L^\infty_{t,x}}\big] \les \kappa^3.
\end{gather*}

Using $|\psi_R'|\lesssim R^{-1} \psi_R$ and the Cauchy--Schwarz inequality, we may bound
\begin{align*}
& \bigg| \int_0^{t_0} \int_\R \psi'_R \frac{g_1 - g_2}{g_1g_2}\bigg[2 q_1 g_1-2 q_2 g_2 + \frac{(g'_1)^2}{2g_1} - \frac{(g'_2)^2}{2g_2}\bigg] \, dx \, dt \bigg| \\
& \les \frac{1}{R } \int_0^{t_0} \int_\R \psi_R  \frac{1}{g_1g_2} \bigg[ \frac{1}{2\eps}(g_1-g_2)^2 + \frac\eps 2 \bigg( 2q_1 g_1 - 2 q_2 g_2 + \frac{(g'_1)^2}{2g_1} - \frac{(g'_2)^2}{2g_2} \bigg)^2 \bigg] \, dx \, dt \\
& \les\frac{1}{\eps R } \int_0^{t_0} \int_\R \psi_R \frac{(g_1-g_2)^2}{2g_1g_2} \, dx \, dt + \frac{\eps}{2R } \bigl[\|q_1\|_{L^\infty_{T,x}} + \|q_2\|_{L^\infty_{T,x}} + \kappa^2\bigr]^2 \int_0^{t_0} \int \psi_R\, dx \,dt \\
& \les  \frac{1}{\eps R } \int_0^{t_0} \int_\R \psi_R \frac{(g_1-g_2)^2}{2g_1g_2} \, dx \, dt + \eps\kappa^4 T, 
\end{align*}
for any choice of $\eps>0$.

Combining the estimates above, we obtain
\begin{align*}
\int_\R \psi_R\frac{(g_1-g_2)^2}{2g_1g_2}(t_0)  \,dx & \les \bigg[ \frac{1}{R^3} + \frac{\kappa}{R^2} + \frac{\kappa^2}{R} + \kappa^3 + \frac{1}{\eps R} \bigg] \int_0^{t_0} \int_\R \psi_R \frac{(g_1-g_2)^2}{2g_1g_2} \, dx \, dt\\
&\quad +\eps \kappa^4T.
\end{align*}
Choosing $\eps = R^{-\frac12}$, recalling that $R\geq 1$, and applying Gronwall's inequality, we conclude that
\begin{align*}
 \int_\R \psi_R\frac{(g_1-g_2)^2}{2g_1g_2}(t_0)\, dx  & \les \frac{\kappa^4T}{\sqrt{R}} e^{CT (1+ \kappa^3)},
 \end{align*}
for some constant $C>0$ independent of $R$ and $\kappa$. 

As $R_1 \leq R_2$ implies $\psi_{R_1}(x) \leq \psi_{R_2}(x)$ for all $x\in \R$, we deduce that
\begin{align*}
 \int_\R \psi_{R}\frac{(g_1-g_2)^2}{2g_1g_2}(t_0)\, dx  &\leq \lim_{\widetilde R \to \infty}   \int_\R \psi_{\widetilde R}\frac{(g_1-g_2)^2}{2g_1g_2}(t_0) \,  dx =0 , 
\end{align*}
for all $R\geq 1$.  Recalling \eqref{g-bdd}, we conclude that $g_1(t_0)\equiv g_2(t_0)$.  Finally, using the identity \eqref{q-diff} we deduce that $q_1(t_0)=q_2(t_0)$ as elements of $L^\infty(\R)$.
\end{proof}

\section{Quasiperiodic solutions to KdV}\label{sec:quasi}

In this section, we discuss the solution to \eqref{kdv} with initial data
\begin{align}\label{square}
q_0(x) = \operatorname{sq}(\al_1x) + \operatorname{sq}(\al_2x),
\end{align}
where $\operatorname{sq}$ denotes the square wave \eqref{sq wave} of period $2\pi$.  We can rewrite $q_0$ as 
\begin{align}\label{q0 hat}
q_0(x) = \sum_{\xi\in \dot{\Z}^2} \tfrac{2}{\pi i (\al\cdot \xi)} \big( \al_1 \chi_{\{\xi_1 \text{ odd}, \,\xi_2 =0\}} +\al_2 \chi_{\{\xi_1 =0 ,\, \xi_2 \text{ odd}\}} \big) e^{i (\al\cdot \xi) x }, 
\end{align}
where $\xi=(\xi_1,\xi_2) \in \dot{\Z}^2 = \Z^2 \setminus\{(0,0)\}$ and $\al = (\al_1,\al_2)$. Under the assumption that $\al$ is rationally independent, that is,  
\begin{align}\label{6:59}
\al \cdot \xi \neq 0 \qtq{for all} \xi\in\dot{\Z}^2, 
\end{align}
the data $q_0$ is not periodic, but merely quasiperiodic.  

We shall only consider parameters $\al$ satisfying a quantitative version of \eqref{6:59}, namely, the following diophantine condition: there exists $\gamma>1$ and $C_0>0$ such that 
\begin{equation}\label{diophantine}
|\al \cdot \xi| \geq C_0 |\xi|^{-\gamma} \qtq{for all} \xi\in\dot{\Z}^2.
\end{equation}

Local well-posedness of KdV for a class of quasiperiodic initial data that includes our choice \eqref{square} was proved by Tsugawa in \cite{Tsugawa12}.  Let us recall the version of the spaces he employed that are most relevant to the case considered here.

\begin{definition}[{\cite{Tsugawa12}}]
For $\theta \in \R$, the Banach space $G^\theta$ is defined by
\begin{align*}
G^\theta := \bigg\{ f(x) = \sum_{\xi\in\dot{\Z}^2} \ft{f}_\xi \, e^{i(\al\cdot \xi) x} \  \Big\vert \ \   \ft{f}: \dot{\Z}^2 \to \C, \ \|f\|_{G^\theta} < \infty\bigg\} 
\end{align*}
where 
\begin{align*}
\|f\|_{G^\theta} : = \| \ft{f} \|_{\ft{G}^\theta} = \bigg\| \frac{\jb{\xi_1}^\theta \jb{\xi_2}^\theta}{|\al\cdot \xi|^\frac12} \ft{f}_\xi \bigg\|_{\l^2_\xi(\dot{\Z}^2)}.
\end{align*}
We will also employ the $X^{s,b}$-type space defined via the norm
\begin{align*}
\|q\|_{X^{\theta,\frac12}} &:= \big\| \jb{\tau - (\al\cdot \xi)^3}^\frac12 \F_{t,x} q (\tau, \xi) \big\|_{\ft{G}^\theta L^2_\tau}.
\end{align*}
\end{definition}

We observe that the initial data in \eqref{square} satisfies
\begin{align*}
q_0 \in G^\theta \text{ for } \theta<1.
\end{align*}

Tsugawa's solutions automatically have vanishing Fourier coefficient at zero frequency; indeed, they are constructed via a contraction mapping argument in the spaces just reproduced, as well as an additional space $Y^{\theta,0}$ that we do not need to discuss here.  The vanishing of the zero Fourier coefficient may be viewed as the quasiperiodic analogue of the well-known conservation of $\int q$ and may ultimately be traced to the fact that the right-hand side of \eqref{kdv} is a complete derivative.

As with earlier works proving local well-posedness using $X^{s,b}$ technology, Tsugawa employs a truncated Duhamel formulation of the problem:
\begin{equation}\label{duhamel}
q(t) = \eta(t) e^{-t\partial_x^3} q_0 + 3 \eta(t) \int_0^t e^{-(t-s)\partial_x^3} \dx (\eta_T q^2)(s) \, ds.
\end{equation}
Here $\eta(t)$ is a fixed smooth cutoff function at unit scale, while $\eta_T(t)$ is a cutoff to a narrower time window dictated by the size of the initial data.  Evidently, fixed points of \eqref{duhamel} are solutions to \eqref{kdv} at least on the small time interval $[-T,T]$.  The big advantage of this formulation of the problem is that it allows $q(t)$ to be defined globally in time and so one may employ the standard spacetime Fourier transform.

\begin{theorem}[{\cite[Theorem 1.1]{Tsugawa12}}]\label{th:lwp}
The KdV equation \eqref{kdv} is locally well-posed in $G^\theta$ with $\theta>\frac14$ in the following sense: for each $q_0 \in G^\theta$, there exist $T>0$ and a unique solution $q \in C\big(\R; G^\theta \big) \cap X^{\theta,\frac12}$ of \eqref{duhamel}.
\end{theorem}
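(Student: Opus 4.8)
The plan is to solve the fixed-point problem posed by the truncated Duhamel formula \eqref{duhamel} via the contraction mapping principle in (a slight enlargement of) the space $X^{\theta,\frac12}$, following the Bourgain $X^{s,b}$ methodology adapted to the quasiperiodic frequency lattice $\dot\Z^2$. Writing
$$\Phi(q)(t) = \eta(t) e^{-t\dx^3} q_0 + 3\eta(t) \int_0^t e^{-(t-s)\dx^3}\dx(\eta_T q^2)(s)\, ds,$$
I would show that, for $T$ sufficiently small depending on $\|q_0\|_{G^\theta}$, the map $\Phi$ is a contraction on a ball of $X^{\theta,\frac12}$, and then identify its fixed point as the desired solution. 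Fixed points of $\Phi$ are solutions of \eqref{kdv} on $[-T,T]$, and they automatically carry the vanishing zero Fourier coefficient built into the function spaces.

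The argument rests on three ingredients. The first two are the standard linear estimates: the homogeneous bound $\|\eta(t)e^{-t\dx^3}q_0\|_{X^{\theta,\frac12}} \les \|q_0\|_{G^\theta}$ and the inhomogeneous (Duhamel) bound $\|\eta(t)\int_0^t e^{-(t-s)\dx^3} F(s)\,ds\|_{X^{\theta,\frac12}} \les \|F\|_{X^{\theta,-\frac12}}$. Both follow from the definition of the $X^{\theta,b}$ norm together with the $H^{\frac12}$-regularity of the smooth cutoff $\eta$; they are insensitive to the quasiperiodic structure and are essentially formal once the modulation weight $\jb{\tau-(\al\cdot\xi)^3}^b$ is in place.

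The heart of the matter --- and the step I expect to be the main obstacle --- is the bilinear estimate
$$\|\dx(uv)\|_{X^{\theta,-\frac12}} \les T^{\dl}\, \|u\|_{X^{\theta,\frac12}}\|v\|_{X^{\theta,\frac12}}$$
for some $\dl>0$, which both closes the nonlinear map and supplies the power of $T$ needed for smallness. On the Fourier side, a product contributes frequencies $\al\cdot\xi = \al\cdot\xi' + \al\cdot\xi''$ arising from lattice additions $\xi = \xi'+\xi''$ in $\dot\Z^2$, and the governing resonance identity is the familiar KdV factorization
$$(\al\cdot\xi)^3 - (\al\cdot\xi')^3 - (\al\cdot\xi'')^3 = 3\,(\al\cdot\xi)(\al\cdot\xi')(\al\cdot\xi''),$$
so that the largest of the three modulations $\jb{\tau-(\al\cdot\xi)^3}$, $\jb{\tau'-(\al\cdot\xi')^3}$, $\jb{\tau''-(\al\cdot\xi'')^3}$ dominates the product $|\al\cdot\xi|\,|\al\cdot\xi'|\,|\al\cdot\xi''|$. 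This is exactly what is needed to absorb the derivative loss from $\dx$ together with the three $|\al\cdot\xi|^{-\frac12}$ weights built into the $G^\theta$-norm. The remaining --- and genuinely quasiperiodic --- difficulty is the summation over $\dot\Z^2$: one must compare the two-dimensional output weight $\jb{\xi_1}^\theta\jb{\xi_2}^\theta$ against the input weights and run a Cauchy--Schwarz argument in the $(\xi',\tau')$ variables, controlling the small denominators $|\al\cdot\xi|$ by the diophantine condition \eqref{diophantine}. It is in this lattice summation that the threshold $\theta>\tfrac14$ arises; the exponent $\gamma$ of \eqref{diophantine} enters the implicit constants but not the range of admissible $\theta$.

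With the three estimates in hand, the contraction closes: the homogeneous term is controlled by the data, while the $T^{\dl}$ factor renders $\Phi$ a strict contraction for small $T$, yielding both existence and uniqueness within the ball, which a short continuity-in-$T$ bootstrap promotes to uniqueness in the full solution class. One subtlety remains: at the endpoint $b=\tfrac12$ the embedding $X^{\theta,\frac12}\hookrightarrow C(\R;G^\theta)$ fails, so to obtain the claimed time continuity one works in the intersection with the auxiliary $\ell^1$-in-modulation space $Y^{\theta,0}$ alluded to in the excerpt, for which the linear and bilinear estimates must be verified in parallel. This is routine bookkeeping rather than a new difficulty.
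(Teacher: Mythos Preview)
The paper does not prove this theorem; it is quoted verbatim from Tsugawa \cite{Tsugawa12} and simply cited. So there is no proof in the paper to compare against. Your outline is a faithful sketch of Tsugawa's actual argument: contraction mapping in the endpoint Bourgain space $X^{\theta,\frac12}$, driven by the homogeneous and Duhamel linear estimates together with the bilinear estimate for $\dx(uv)$, with the auxiliary $Y^{\theta,0}$ space supplying the embedding into $C(\R;G^\theta)$ that fails at $b=\tfrac12$.

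One point should be corrected. You invoke the diophantine condition \eqref{diophantine} to control small denominators $|\al\cdot\xi|$ and say that the exponent $\gamma$ enters the implicit constants. This is not so: Tsugawa's bilinear estimate requires only rational independence, i.e., $\al\cdot\xi\neq 0$ for $\xi\in\dot\Z^2$, and no quantitative lower bound on $|\al\cdot\xi|$. The resonance identity $(\al\cdot\xi)^3-(\al\cdot\xi')^3-(\al\cdot\xi'')^3=3(\al\cdot\xi)(\al\cdot\xi')(\al\cdot\xi'')$ produces, after passing to the largest modulation, exactly the factor $|\al\cdot\xi|^{\frac12}|\al\cdot\xi'|^{\frac12}|\al\cdot\xi''|^{\frac12}$; this cancels perfectly against the derivative $|\al\cdot\xi|$ and the three $|\al\cdot\xi|^{-\frac12}$ weights built into the $G^\theta$ norm, leaving only the lattice weights $\jb{\xi_j}^\theta$ to be summed. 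The threshold $\theta>\tfrac14$ comes solely from that two-dimensional lattice sum and is independent of $\al$. In this paper the diophantine condition enters only later, in Theorem~\ref{prop:nonlinear-smoothing}, where the constraint $\theta>\tfrac\gamma2$ appears precisely because there one must bound $|\al\cdot\xi|^{-1}$ from above in regions where the resonance cancellation is not available.
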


The remainder of this section is devoted to demonstrating nonlinear smoothing for the solution with initial data \eqref{square}.  By nonlinear smoothing, we mean that the difference between the linear and nonlinear evolutions of the initial data is smoother than the linear evolution alone.  In this section, such regularity will be expressed through enhanced decay of the Fourier coefficients.  Specifically, we will show that the Fourier coefficients of the difference belong to $\ell^1$.  By comparison, the Fourier coefficients of the initial data (and so those of its linear evolution) are merely weak-$\ell^1$; see \eqref{q0 hat}.  While this may be viewed as a minute difference, it marks a phase transition in terms of spatial continuity.

\begin{theorem}\label{prop:nonlinear-smoothing}
Let $q_0$ be as in \eqref{square} with $\al$ satisfying the diophantine condition \eqref{diophantine}.  Let $q \in C\big(\R; G^\theta \big) \cap X^{\theta,\frac12} $ be the corresponding solution of \eqref{kdv} given by Theorem~\ref{th:lwp}. If $ \max\{\frac78 , \frac\gamma2\}<\theta <1$, then
\begin{align}\label{NS}
\big\|  \ft{q}_\xi(t) - e^{it(\al\cdot \xi)^3}&\ft{q}_\xi(0)  \big\|_{L^\infty_t \l^1_\xi([-T,T]\times\dot{\Z}^2)}\notag\\
&\les (1+  \|q\|_{X^{\theta,\frac12}}) \|q\|^2_{X^{\theta,\frac12}} + (1+T) (1+\|q\|_{L^\infty_t G^{\theta}} ) \|q\|^2_{L^\infty_t G^{\theta}}.
\end{align}
\end{theorem}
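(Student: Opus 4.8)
The plan is to run a time-resonance (Poincaré--Dulac normal form) argument that exploits the \emph{absence of exact resonances}. On $[-T,T]$ the cutoffs $\eta,\eta_T$ in \eqref{duhamel} equal $1$, so the quantity in \eqref{NS} is exactly the Duhamel term; writing $\lambda_\xi:=\al\cdot\xi$ and expanding the nonlinearity as a convolution over $\dot{\Z}^2$,
\[
\ft q_\xi(t) - e^{it\lambda_\xi^3}\ft q_\xi(0) = 3i\lambda_\xi\int_0^t e^{i(t-s)\lambda_\xi^3}\sum_{\zeta+\eta=\xi}\ft q_\zeta(s)\,\ft q_\eta(s)\,ds .
\]
The key algebraic input is the KdV resonance identity: whenever $\zeta+\eta=\xi$, the elementary fact $a^3-b^3-c^3=3abc$ for $a=b+c$ yields
\[
\Phi := \lambda_\xi^3 - \lambda_\zeta^3 - \lambda_\eta^3 = 3\,\lambda_\xi\lambda_\zeta\lambda_\eta .
\]
Crucially, $\Phi$ never vanishes: since $\al$ is rationally independent (so $\lambda_\xi\neq0$ for $\xi\in\dot{\Z}^2$ by \eqref{6:59}) and Tsugawa's solution carries no zero mode (so $\zeta,\eta\in\dot{\Z}^2$), all three factors are nonzero. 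The diophantine condition \eqref{diophantine} upgrades this to the quantitative bound $|\Phi|\ges(|\xi|\,|\zeta|\,|\eta|)^{-\gamma}$, together with $|\lambda_\zeta|^{-1}\les|\zeta|^{\gamma}$.

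First I would pass to the profile $\ft u_\zeta(s):=e^{-is\lambda_\zeta^3}\ft q_\zeta(s)$, turning the integrand into $e^{it\lambda_\xi^3}e^{-is\Phi}\ft u_\zeta\ft u_\eta$, and then integrate by parts in $s$ against $e^{-is\Phi}$ --- legitimate precisely because $\Phi\neq0$. This replaces the oscillatory integral by a boundary term carrying the smoothing factor $\lambda_\xi/\Phi=\tfrac13(\lambda_\zeta\lambda_\eta)^{-1}$, together with a remainder in which $\partial_s$ falls on $\ft u_\zeta\ft u_\eta$. By the equation, $\partial_s\ft u_\zeta = 3i\lambda_\zeta e^{-is\lambda_\zeta^3}\ft{(q^2)}_\zeta$, so the remainder is genuinely trilinear in $q$. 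The boundary terms are bilinear and, being evaluated at the fixed times $0$ and $t$, are controlled by $\|q\|_{L^\infty_t G^\theta}^2$; the trilinear remainder I would split by modulation size, estimating the high-modulation part through the $\jb{\tau-\lambda^3}^{1/2}$ weight of $X^{\theta,\frac12}$ (producing the $\|q\|_{X^{\theta,1/2}}^2+\|q\|_{X^{\theta,1/2}}^3$ contribution) and the low-modulation part by bounding $\int_0^t|\cdot|\,ds$ directly over $[-T,T]$, which costs the factor $1+T$ and is controlled by $\|q\|_{L^\infty_t G^\theta}$. Summing these contributions reproduces the two groups of terms on RHS\eqref{NS}.

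The heart of the matter --- and the step I expect to be hardest --- is the $\l^1_\xi$ summation once the gain $(\lambda_\zeta\lambda_\eta)^{-1}$ has been extracted. Substituting the definition of the $G^\theta$ norm, $|\ft q_\zeta| = |\lambda_\zeta|^{1/2}\jb{\zeta_1}^{-\theta}\jb{\zeta_2}^{-\theta}c_\zeta$ with $\|c\|_{\l^2}\sim\|q\|_{G^\theta}$, each gained factor $|\lambda_\zeta|^{-1}$ leaves a residual small divisor $|\lambda_\zeta|^{-1/2}$, which \eqref{diophantine} converts into the polynomial growth $|\zeta|^{\gamma/2}$. The resulting convolution over $\zeta+\eta=\xi$ then converges and lands in $\l^1_\xi$ only if the $\theta$-weights dominate these losses, and here the two hypotheses play complementary roles: $\theta>\gamma/2$ is needed to defeat the divisor growth $|\zeta|^{\gamma/2}|\eta|^{\gamma/2}$, while $\theta>\tfrac78$ is the intrinsic summability threshold for the two-dimensional lattice convolution (the analogue here of the $s>-\tfrac34$ threshold in the classical KdV bilinear estimate). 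One must be careful to carry out these sums without squandering the full strength of the $1/\Phi$ gain, since the entire point is that the output is genuinely $\l^1$, whereas the input \eqref{q0 hat} is only weak-$\l^1$ --- this gap being exactly the continuity phase transition. Finally, I would justify the interchange of summation and integration (via the absolute convergence furnished by working inside $X^{\theta,\frac12}\cap C_tG^\theta$) and pass from the spacetime $X^{\theta,\frac12}$ estimates to the pointwise-in-$t$ statement of \eqref{NS}, which is routine given the $\eta$-cutoff built into \eqref{duhamel}.
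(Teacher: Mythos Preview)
Your plan has the right ingredients but is missing a structural frequency decomposition that the paper relies on, and without it the argument as described does not close for $\theta<1$.

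The paper does \emph{not} apply the normal form to the entire Duhamel sum. After the symmetry reductions $|\lambda_{\xi^{(1)}}|\ge|\lambda_{\xi^{(2)}}|$ and $|\xi_1^{(1)}|\ge|\xi_2^{(1)}|$, it splits into Case~1, where $|\xi_1^{(1)}|\lesssim|\xi_2^{(1)}|+|\xi_1^{(2)}|+|\xi_2^{(2)}|$, and Case~2 (the region $\mathcal R$), where $|\xi_1^{(1)}|$ strictly dominates all other components. In Case~1 the bilinear Duhamel term is estimated \emph{directly} through the $X^{\theta,1/2}$ modulation weight (no integration by parts), using $\max_j\jb{\tau_j-\lambda_{\xi^{(j)}}^3}\gtrsim|\Phi|$; this closes for $\theta>\tfrac34$. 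The normal form is performed only on $\mathcal R$, where one has the decisive \emph{non-diophantine} lower bound $|\lambda_{\xi^{(1)}}|\sim|\xi_1^{(1)}|$. The boundary term $\mathcal B$ then carries only a \emph{single} genuine small divisor $|\lambda_{\xi^{(2)}}|^{-1}$, whose diophantine cost $|\xi^{(2)}|^\gamma$ is absorbed into the spare $\jb{\xi_1^{(1)}}^{-1}$ coming from $|\lambda_{\xi^{(1)}}|^{-1}$ (this is exactly where $\theta>\gamma/2$ appears). Your global normal form would, in the Case~1 region, leave the boundary term with two simultaneous small divisors $|\lambda_{\xi^{(1)}}|^{-1}|\lambda_{\xi^{(2)}}|^{-1}$, and the Cauchy--Schwarz bound against $\|q\|_{L^\infty_tG^\theta}^2$ that you invoke does not sum under the naive diophantine substitution.

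The trilinear remainder is also more asymmetric than your sketch allows. After integration by parts, the two pieces $\mathcal N_1,\mathcal N_2$ (according to whether $\partial_s$ lands on the high- or the low-frequency factor) behave very differently. The easy one, $\mathcal N_2$, retains $|\lambda_{\xi^{(1)}}|^{-1}\sim\jb{\xi_1^{(1)}}^{-1}$ and is handled by crude time-integration over $[0,t]$, producing the $T\|q\|_{L^\infty_tG^\theta}^3$ term; the constraint $\theta>\tfrac78$ arises precisely here, from a Hardy--Littlewood (weak Young) convolution estimate on the lattice, not from a bilinear analogue of the $s>-\tfrac34$ threshold as you suggest. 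The hard one, $\mathcal N_1$, inherits the residual small divisor $|\lambda_{\xi^{(2)}}|^{-1}$ with $\xi^{(2)}$ the low frequency, and cannot be dispatched by a modulation split alone: the paper reverts to $q$-variables, brings in the \emph{trilinear} resonance function $\Phi_{234}=3\lambda_{\xi^{(2)}+\xi^{(3)}}\lambda_{\xi^{(2)}+\xi^{(4)}}\lambda_{\xi^{(3)}+\xi^{(4)}}$ through the $X^{\theta,1/2}$ weight, and runs three further sub-cases (2.1--2.3), each again exploiting the $\mathcal R$-restriction. This is where most of the effort in the proof actually lies, and it is not visible in your outline.
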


Before proceeding to the proof of Theorem~\ref{prop:nonlinear-smoothing}, we recall the following standard result that allows us to handle the time cutoff appearing in \eqref{duhamel}.

\begin{lemma}\label{lm:cutoff}
Let $0\leq b <\frac12$ and fix $T>0$. Then for every $f\in H^b(\R)$, 
\begin{align*}
\| \chi_{[-T,T]}  f  \|_{H^b} \les \|f\|_{H^b}.
\end{align*}
\end{lemma}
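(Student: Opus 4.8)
The plan is to dispose of the endpoint case $b=0$ trivially and reduce the remaining range to a weighted $L^2$ estimate via the Gagliardo characterization of $H^b$. For $b=0$ the bound is immediate from $0\le\chi_{[-T,T]}\le 1$, so I assume $0<b<\tfrac12$ and recall that in this range
\[
\|g\|_{H^b}^2 \approx \|g\|_{L^2}^2 + \int_\R\int_\R \frac{|g(x)-g(y)|^2}{|x-y|^{1+2b}}\,dx\,dy.
\]
Writing $\chi=\chi_{[-T,T]}$ and $g=\chi f$, the $L^2$ part is controlled by $\|f\|_{L^2}$ since $0\le\chi\le1$, so everything comes down to estimating the Gagliardo seminorm of $\chi f$.

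Next I would split $\chi(x)f(x)-\chi(y)f(y)=\chi(x)\bigl(f(x)-f(y)\bigr)+\bigl(\chi(x)-\chi(y)\bigr)f(y)$ and apply the elementary inequality $(u+v)^2\le 2u^2+2v^2$. The first piece contributes at most the Gagliardo seminorm of $f$ (again because $0\le\chi\le1$), hence is $\les\|f\|_{H^b}^2$. The second piece is
\[
I:=\int_\R\int_\R \frac{|\chi(x)-\chi(y)|^2\,|f(y)|^2}{|x-y|^{1+2b}}\,dx\,dy,
\]
where $|\chi(x)-\chi(y)|^2=1$ precisely when exactly one of $x,y$ lies in $[-T,T]$. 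Performing the $x$-integration for fixed $y$ produces the boundary integrals $\int_{|x|>T}|x-y|^{-1-2b}\,dx$ for $y\in[-T,T]$ and $\int_{-T}^T|x-y|^{-1-2b}\,dx$ for $y\notin[-T,T]$; both converge precisely because $2b<1$, and each evaluates to a constant multiple of $\operatorname{dist}(y,\{-T,T\})^{-2b}$. This yields
\[
I\les \int_\R \frac{|f(y)|^2}{\operatorname{dist}(y,\{-T,T\})^{2b}}\,dy \les \int_\R \frac{|f(y)|^2}{|y-T|^{2b}}\,dy+\int_\R \frac{|f(y)|^2}{|y+T|^{2b}}\,dy.
\]

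The crux, and the step I expect to be the main obstacle, is to dominate each of these weighted integrals by $\|f\|_{H^b}^2$. This is exactly the one-dimensional fractional Hardy inequality: for $0<b<\tfrac12$ and any $a\in\R$,
\[
\int_\R \frac{|f(y)|^2}{|y-a|^{2b}}\,dy \les \int_\R\int_\R \frac{|f(x)-f(y)|^2}{|x-y|^{1+2b}}\,dx\,dy \les \|f\|_{H^b}^2,
\]
with constant independent of $a$ by translation invariance; applying it at $a=\pm T$ closes the estimate. It is worth emphasizing that the hypothesis $b<\tfrac12$ is sharp and enters twice: once to make the boundary integrals defining $I$ converge, and once for the fractional Hardy inequality itself, which degenerates at $b=\tfrac12$. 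In the write-up I would either invoke this inequality as standard or, to keep matters self-contained, prove it by the routine one-dimensional computation.
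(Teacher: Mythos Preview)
Your argument is correct. The paper does not actually prove this lemma; it is stated there as a ``standard result'' and invoked without proof. Your route via the Gagliardo seminorm, the product-rule splitting, and the one-dimensional fractional Hardy inequality is one of the canonical ways to establish it, and every step is sound. In particular, the two places where you use $b<\tfrac12$ are exactly right: local integrability of $|x-y|^{-1-2b}$ across the jump, and the validity of the homogeneous Hardy inequality $\bigl\||y-a|^{-b}f\bigr\|_{L^2}\lesssim\|f\|_{\dot H^b}$ in dimension one. Note that your argument in fact yields a constant independent of $T$ (the two applications of Hardy at $a=\pm T$ are each translation-invariant), which is slightly stronger than the paper's formulation and is what is actually needed later when the lemma is applied with the variable cutoff $\chi_{[0,t]}$.
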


\begin{proof}[Proof of Theorem~\ref{prop:nonlinear-smoothing}]
Using \eqref{duhamel}, for $\xi\in\dot{\Z}^2$ and $|t|\leq T$ we can write 
\begin{align}\label{smooth-q}
\ft{q}_\xi(t) - e^{it(\al\cdot \xi)^3 } \ft{q}_\xi(0)  = 3i \sum_{\xi^{(1)} + \xi^{(2)}=\xi}\int_0^t e^{i(t-s)(\al\cdot \xi)^3}  (\al \cdot \xi) \ft{q}_{\xi^{(1)}}(s)  \ft{q}_{\xi^{(2)}}(s)\,ds.
\end{align}
The sum above runs over all decompositions of $\xi$ with $\xi^{(1)},\xi^{(2)}\in \dot{\Z}^2$. 

To estimate \eqref{smooth-q}, we divide the sum into several regions.  By symmetry, it suffices to consider only the case when
\begin{align}\label{sym ass}
|\al\cdot \xi^{(1)}| \geq |\al \cdot \xi^{(2)}| \qtq{and} |\xi^{(1)}_1| \geq |\xi^{(1)}_2|,
\end{align}
where we use the notation $\xi^{(j)}=( \xi^{(j)}_1,\xi^{(j)}_2)$ with $j=1,2$.  This allows us to write
\begin{equation}\label{aux1-nonlinear}
|\al\cdot \xi| = |\al \cdot \xi^{(1)} + \al \cdot \xi^{(2)} | \les |\al \cdot \xi^{(1)} | \les |\xi^{(1)}_1|.
\end{equation}

\medskip 

\noi \textbf{Case 1: $ | \xi^{(1)}_1| \les |\xi^{(1)}_2| + |\xi_1^{(2)}| + |\xi_2^{(2)}|$.}
In view of the factorization
\begin{align}\label{factor}
(\al\cdot \xi)^3 - (\al\cdot \xi^{(1)})^3 - (\al \cdot \xi^{(2)})^3 = 3 (\al \cdot \xi ) (\al \cdot \xi^{(1)}) (\al \cdot \xi^{(2)})
\end{align}
where $\xi= \xi^{(1)} + \xi^{(2)}$, we have
\begin{align*}
2\max\bigl\{ \big| (\al\cdot \xi)^3 - \tau - (\al\cdot \xi^{(1)})^3\big|,\big |\tau - (\al\cdot \xi^{(2)})^3\big|\bigr\} &\geq 3 \big| (\al \cdot \xi) (\al \cdot \xi^{(1)}) (\al \cdot \xi^{(2)}) \big|.
\end{align*} 
We present the details in the case when
$$
\max\bigl\{ \big| (\al\cdot \xi)^3 - \tau - (\al\cdot \xi^{(1)})^3\big|,\big|\tau - (\al\cdot \xi^{(2)})^3\big|\bigr\} = \big|\tau - (\al\cdot \xi^{(2)})^3\big|.
$$
In the remaining case, one simply swaps the roles of $\ft{q}_{\xi^{(1)}}$ and $\ft{q}_{\xi^{(2)}}$ when estimating the time integral, including which term $\chi_{[0,t]}$ gets grouped with.
Here, $\chi_{[0,t]}$ denotes the sharp cutoff to the interval $[0,t]$.

To take advantage of the fact that $q\in X^{\s,\frac12}$, we rewrite RHS\eqref{smooth-q} as 
\begin{align*}
\text{RHS\eqref{smooth-q}} & = 3 e^{it (\al\cdot \xi)^3} 
\sum_{\xi^{(1)} + \xi^{(2)}=\xi } i (\al \cdot \xi) \int_\R  e^{-is(\al\cdot \xi)^3} \chi_{[0,t]}(s)\ft{q}_{\xi^{(1)}}(s)  \ft{q}_{\xi^{(2)}}(s)  \, ds \\
& = 3 \sqrt{2\pi} \ e^{it (\al\cdot \xi)^3} \!\!\!\!\!
\sum_{\xi^{(1)} + \xi^{(2)} =\xi} \!\!\!i (\al \cdot \xi) \int_\R \F_t \big( \chi_{[0,t]} \ft{q}_{\xi^{(1)}} \big) \big( (\al\cdot \xi)^3 - \tau \big) \F_t \ft{q}_{\xi^{(2)}}(\tau) \, d \tau.
\end{align*}
Using Cauchy--Schwarz, we may estimate
\begin{align*}
& \bigg| \int_\R \F_t \big( \chi_{[0,t]} \ft{q}_{\xi^{(1)}} \big) \big( (\al\cdot \xi)^3 - \tau \big) \F_t \ft{q}_{\xi^{(2)}}(\tau) \, d \tau \bigg| \\
& \les \frac{1}{|\al \cdot \xi|^\frac12 |\al \cdot \xi^{(1)}|^\frac12 |\al \cdot \xi^{(2)}|^\frac12} \int_\R \bigl| \F_t \big( \chi_{[0,t]} \ft{q}_{\xi^{(1)}} \big) \big( (\al\cdot \xi)^3 - \tau \big) \bigr| \bigl\langle\tau - (\al\cdot \xi^{(2)})^3\bigr\rangle^\frac12 \bigl| \F_t \ft{q}_{\xi^{(2)}}(\tau) \bigr| \, d\tau \\
& \les  \frac{1}{|\al \cdot \xi|^\frac12 |\al \cdot \xi^{(1)}|^\frac12 |\al \cdot \xi^{(2)}|^\frac12} \big\| \F_t \big( \chi_{[0,t]} \ft{q}_{\xi^{(1)}} \big) (\tau) \big\|_{L^2_\tau} \big\|  \bigl\langle\tau - (\al\cdot \xi^{(2)})^3\bigr\rangle^\frac12 \F_t \ft{q}_{\xi^{(2)}}(\tau) \big\|_{L^2_\tau} \\
& \les \frac{1}{|\al \cdot \xi|^\frac12 |\al \cdot \xi^{(1)}|^\frac12 |\al \cdot \xi^{(2)}|^\frac12} \prod_{j=1}^2 \bigl\|  \jb{\tau - (\al\cdot \xi^{(j)})^3}^\frac12  \F_t \ft{q}_{\xi^{(j)}}(\tau) \bigr\|_{L^2_\tau}.
\end{align*}

One more application of Cauchy--Schwarz shows that we may estimate the contribution of Case~1 to LHS\eqref{NS} by a constant multiple of
\begin{align*}
&\sum_\xi \sum_{\xi=\xi^{(1)} + \xi^{(2)}} \frac{|\al\cdot \xi|^\frac12}{|\al\cdot \xi^{(1)}|^\frac12 |\al\cdot \xi^{(2)}|^\frac12} \prod_{j=1}^2 \big\|  \jb{\tau - (\al\cdot \xi^{(j)})^3}^\frac12  \F_t \ft{q}_{\xi^{(j)}}(\tau) \big\|_{L^2_\tau} \\
& = \sum_\xi \sum_{\xi=\xi^{(1)} + \xi^{(2)}} \frac{|\al\cdot \xi|^\frac12}{\prod
\limits_{j=1}^2 \jb{\xi_1^{(j)}}^\theta \jb{\xi_2^{(j)}}^\theta} \prod_{j=1}^2 \frac{\jb{\xi^{(j)}_1}^\theta \jb{\xi^{(j)}_2}^\theta}{|\al\cdot \xi^{(j)}|^\frac12} \big\|  \jb{\tau - (\al\cdot \xi^{(j)})^3}^\frac12  \F_t \ft{q}_{\xi^{(j)}}(\tau) \big\|_{L^2_\tau} \\
& \les A^\frac12 \|q\|_{X^{\theta,\frac12}}^2, 
\end{align*}
where 
\begin{align*}
A:= \sum_\xi \sum_{\xi=\xi^{(1)} + \xi^{(2)}} \frac{|\al\cdot \xi|}{ \jb{\xi_1^{(1)}}^{2\theta} \jb{\xi_2^{(1)}}^{2\theta} \jb{\xi_1^{(2)}}^{2\theta} \jb{\xi_2^{(2)}}^{2\theta}}.
\end{align*}
Using  \eqref{aux1-nonlinear} and the description of Case~1, we may bound
$$|\al\cdot \xi|\lesssim |\xi_1^{(1)}| \lesssim \jb{\xi_1^{(1)}}^\frac12 \jb{\xi_2^{(1)}}^\frac12 \jb{\xi_1^{(2)}}^\frac12 \jb{\xi_2^{(2)}}^\frac12$$
and so
\begin{align*}
A & \les \sum_\xi \sum_{\xi=\xi^{(1)} + \xi^{(2)}} \frac{1}{ \jb{\xi_1^{(1)}}^{2\theta-\frac12} \jb{\xi_2^{(1)}}^{2\theta- \frac12} \jb{\xi_1^{(2)}}^{2\theta-\frac12} \jb{\xi_2^{(2)}}^{2\theta-\frac12}} \les 1,
\end{align*}
provided $2\theta - \frac12>1$, or equivalently, $\theta > \frac34$.

\medskip

\noi\textbf{Case 2: $ |\xi^{(1)}_1| \gg | \xi^{(1)}_2 | +  |\xi^{(2)}_1| +  |\xi^{(2)}_2|$}. We write $\mathcal R$ for the set of such decompositions of $\xi$.  To estimate this contribution to \eqref{NS}, we employ the interaction representation $u(t) = e^{t\partial_x^3}q(t)$ and a normal form transformation:
\begin{align}\label{smooth-u}
e^{-it(\al\cdot \xi)^3 }&  \ft{q}_\xi(t) - \ft{q}_\xi(0)\notag\\
&= 3\int_0^t \sum_{\mathcal R} e^{- 3 i s (\al\cdot \xi)(\al \cdot \xi^{(1)}) (\al \cdot \xi^{(2)})} i(\al \cdot \xi) \ft{u}_{\xi^{(1)}}(s)  \ft{u}_{\xi^{(2)}}(s)  \, ds\notag\\
& = 3 \int_0^t \sum_{\mathcal R} \frac{d}{ds} \Big( \frac{-e^{- 3 i s (\al\cdot \xi)(\al \cdot \xi^{(1)}) (\al \cdot \xi^{(2)})}}{ 3 i (\al\cdot \xi)(\al \cdot \xi^{(1)}) (\al \cdot \xi^{(2)})} \Big) i(\al \cdot \xi) \ft{u}_{\xi^{(1)}}(s)  \ft{u}_{\xi^{(2)}}(s)  \, ds \notag\\
& = \mathcal{B}(t) - \mathcal{B}(0) + \mathcal{N}_1(t) + \mathcal{N}_2(t),
\end{align}
where
\begin{align*}
\mathcal{B}(s) & := \sum_{\mathcal R} e^{- 3 i s (\al\cdot \xi)(\al \cdot \xi^{(1)}) (\al \cdot \xi^{(2)})} \frac{-1}{(\al \cdot \xi^{(1)}) (\al \cdot \xi^{(2)})}  \, \ft{u}_{\xi^{(1)}}(s)  \ft{u}_{\xi^{(2)}}(s), \\
\mathcal{N}_1(t) & :=   3i\sum_{\mathcal R} \sum_{\xi^{(3)} + \xi^{(4)}=\xi^{(1)}} \int_0^t  e^{-is \Phi_{234}} \frac{1}{(\al \cdot \xi^{(2)})} \ft{u}_{\xi^{(2)}}(s)  \ft{u}_{\xi^{(3)}}(s)   \ft{u}_{\xi^{(4)}}(s) \, ds , \nonumber\\
\mathcal{N}_2(t) & :=  3i \int_0^t \sum_{\mathcal R} \sum_{\xi^{(3)} + \xi^{(4)}=\xi^{(2)}} e^{-is \Phi_{134}} \frac{1}{(\al \cdot \xi^{(1)})} \ft{u}_{\xi^{(1)}}(s)  \ft{u}_{\xi^{(3)}}(s)   \ft{u}_{\xi^{(4)}}(s) \, ds , \nonumber
\end{align*}
with $\Phi_{jk\l} = 3 [\al \cdot(\xi^{(j)} + \xi^{(k)})] [\al \cdot(\xi^{(j)} + \xi^{(\l)})] [\al \cdot(\xi^{(k)} + \xi^{(\l)})]$.  We will start by estimating the boundary term $\mathcal{B}$ and the nonlinear term $\mathcal{N}_2$. We will need further case separation to estimate $\mathcal{N}_1$.

\smallskip

\noi\underline{Estimating $\mathcal{B}$}. Applying the Cauchy--Schwarz inequality, we get
\begin{align*}
\| \mathcal{B}(s)\|_{\l^1_\xi} & \les \sum_\xi \sum_{\mathcal R} \frac{1}{|\al\cdot \xi^{(1)}| | \al \cdot \xi^{(2)}|} |\ft{u}_{\xi^{(1)}}(s) \ft{u}_{\xi^{(2)}}(s) | \les B^\frac12 \|u\|^2_{L^\infty G^\s}
\end{align*}
where
\begin{align*}
B:= \sum_\xi \sum_{\xi = \xi^{(1)} + \xi^{(2)}} \frac{1}{|\al\cdot \xi^{(1)}| | \al \cdot \xi^{(2)}| \jb{\xi_1^{(1)}}^{2\theta } \jb{\xi_2^{(1)}}^{2\theta } \jb{\xi_1^{(2)}}^{2\theta } \jb{\xi_2^{(2)}}^{2\theta }} .
\end{align*}
Since $\|u\|_{L^\infty G^\s} = \|q\|_{L^\infty G^\s}$, it only remains to show that $B\les 1$. From \eqref{diophantine} and the fact that $|\al\cdot \xi^{(1)}| \sim |\xi_1^{(1)}| \gg |\xi_1^{(2)}|+  |\xi_2^{(2)}|$, which follows from the restriction to $\mathcal R$, we have 
\begin{align*}
B & \les \sum_\xi \sum_{\mathcal R} \frac{|\xi^{(2)}|^{\gamma}}{ \jb{\xi_1^{(1)}}^{2\theta +1 } \jb{\xi_2^{(1)}}^{2\theta } \jb{\xi_1^{(2)}}^{2\theta } \jb{\xi_2^{(2)}}^{2\theta }} \\
&\les \!\! \sum_{\xi^{(1)}, \xi^{(2)}} \!\!\frac{1}{ \jb{\xi_1^{(1)}}^{2\theta +1-\gamma} \jb{\xi_2^{(1)}}^{2\theta } \jb{\xi_1^{(2)}}^{2\theta } \jb{\xi_2^{(2)}}^{2\theta }}  \les 1, 
\end{align*}
provided $2\theta +1 -\gamma >1$, or equivalently, $\theta >\frac{\gamma}{2}$.

\medskip 

\noi\underline{Estimating $\mathcal{N}_2$}. From the Cauchy--Schwarz inequality we have
\begin{align*}
\big\|\mathcal{N}_2(t) \big\|_{\l^1_\xi} & \les \int_0^t \sum_\xi \sum_{\mathcal R} \sum_{\xi^{(3)} + \xi^{(4)}=\xi^{(2)}} \frac{1}{|\al \cdot \xi^{(1)}|} |\ft{u}_{\xi^{(1)}}(s)  \ft{u}_{\xi^{(3)}}(s)   \ft{u}_{\xi^{(4)}}(s) | \, ds\\
&\les   C^\frac12  T \|u\|^3_{L^\infty G^\s},
\end{align*}
where 
\begin{align*}
C:=&\sum_\xi \sum_{\mathcal R} \sum_{\xi^{(3)} + \xi^{(4)}=\xi^{(2)}} \frac{|\al\cdot \xi^{(3)}| |\al \cdot \xi^{(4)}| }{|\al \cdot \xi^{(1)}| \prod\limits_{j\in\{1,3,4\}} \jb{\xi_1^{(j)}}^{2\theta} \jb{\xi_2^{(j)}}^{2\theta} }. \\
{}\les& \sum_\xi \sum_{\mathcal R}  \sum_{\xi^{(3)} + \xi^{(4)}=\xi^{(2)}} \frac{1}{\jb{\xi_1^{(1)}}^{2\theta+1} \jb{\xi_2^{(1)}}^{2\theta}
	\jb{\xi_1^{(3)}}^{2\theta-1}\jb{\xi_1^{(4)}}^{2\theta-1}\jb{\xi_2^{(3)}}^{2\theta-1}\jb{\xi_2^{(4)}}^{2\theta-1} } .
\end{align*}
On the region of summation we have
$$
\jb{\xi_1^{(1)}} \gg \jb{ \xi^{(2)} } = \jb{ \xi^{(3)} + \xi^{(4)} } \gtrsim \jb{ \xi^{(3)}_1 + \xi^{(4)}_1 }^{\frac12}\jb{ \xi^{(3)}_2 + \xi^{(4)}_2 } ^{\frac12} .
$$
Combining this with the Hardy--Littlewood (weak-Young) inequality, we deduce that
\begin{align*}
C & \les \sum_\xi \sum_{\mathcal R}  \sum_{\xi^{(3)} + \xi^{(4)}=\xi^{(2)}} \frac{1}{\jb{\xi_1^{(1)}}^{2\theta} \jb{\xi_2^{(1)}}^{2\theta} }
	\prod_{\ell=1}^2 \frac{1}{ \jb{\xi_\ell^{(3)}}^{2\theta-1} \jb{\xi_\ell^{(3)}+\xi_\ell^{(4)}}^{\frac12} \jb{\xi_\ell^{(4)}}^{2\theta-1} } \lesssim 1, 
\end{align*}
provided $2\theta>1$ and $\frac43(2\theta-1)>1$, or equivalently, $\theta > \frac78$.

\medskip 

\noi\underline{Estimating $\mathcal{N}_1$}.  The arguments used to estimate $\mathcal{B}$ and $\mathcal{N}_2$ are insufficient to estimate $\mathcal{N}_1$ due to the negative power of $|\al\cdot \xi^{(2)}|$ which can be small. In this case, we rewrite $\mathcal{N}_1$ in the variables $q$ instead of $u$. This gives
\begin{align}\label{nonlinear1-q}
\mathcal{N}_1(t) & = 3i\sum_{\mathcal R} \sum_{\xi^{(3)} + \xi^{(4)}=\xi^{(1)}} \int_0^t e^{-is (\al \cdot \xi)^3} \frac{1}{(\al \cdot \xi^{(2)})} \ft{q}_{\xi^{(2)}}(s)  \ft{q}_{\xi^{(3)}}(s)   \ft{q}_{\xi^{(4)}}(s) \, ds. 
\end{align}

From here, we argue in a manner closer to that used in Case~1.  First, using the factorization \eqref{factor}, under the assumption $\tau_2+\tau_3+\tau_4 = (\al\cdot \xi)^3$ we obtain
\begin{align*}
\max_{j=2,3,4}|\tau_j - (\al\cdot \xi^{(j)})^3| &\ges | \tau_2 - (\al\cdot \xi^{(2)})^3 + \tau_3 - (\al\cdot \xi^{(3)})^3  + \tau_4 - (\al\cdot \xi^{(4)})^3 | \\
& =  | (\al\cdot \xi)^3 - (\al\cdot \xi^{(2)})^3  - (\al\cdot \xi^{(3)})^3 - (\al\cdot \xi^{(4)})^3|\\
& = 3 |\al\cdot(\xi^{(2)} + \xi^{(3)}) |  |\al\cdot(\xi^{(2)} + \xi^{(4)}) | |\al\cdot(\xi^{(3)} + \xi^{(4)}) | \\
& = |\Phi_{234}|.
\end{align*}
We present the details in the case 
$$
\max_{j=2,3,4}|\tau_j - (\al\cdot \xi^{(j)})^3| = |\tau_3 - (\al\cdot \xi^{(3)})^3|.
$$
When the maximum is $|\tau_j - (\al\cdot \xi^{(j)})^3|$ for $j=2$ or $j=4$, one simply swaps the roles of $\ft{q}_{\xi^{(3)}}$ and $\ft{q}_{\xi^{(j)}}$ when handling the time integral, including which term $\chi_{[0,t]}$ gets grouped with.

Focusing on the time integral in \eqref{nonlinear1-q}, we have
\begin{align*}
 \int_0^t  & e^{-is (\al \cdot \xi)^3}  \ft{q}_{\xi^{(2)}}(s)  \ft{q}_{\xi^{(3)}}(s)   \ft{q}_{\xi^{(4)}}(s) \, ds \\
& = \int_\R e^{-is (\al \cdot \xi)^3} \chi_{[0,t]}(s) \ft{q}_{\xi^{(2)}}(s)  \ft{q}_{\xi^{(3)}}(s)   \ft{q}_{\xi^{(4)}}(s) \, ds\\
& =  \int \delta\bigl( (\al\cdot \xi)^3 - \tau_2 -\tau_3 -\tau_4\bigr)  \F_t\big(\chi_{[0,t]}  \ft{q}_{\xi^{(2)}}  \big) (\tau_2) \cdot \F_t \ft{q}_{\xi^{(3)}}(\tau_3)  \cdot \F_t \ft{q}_{\xi^{(4)}}(\tau_4) \, d\vec{\tau}.
\end{align*}
With $0<b<\frac12$, we use Cauchy--Schwarz and Lemma~\ref{lm:cutoff} to estimate
\begin{align*}
&\bigg|  \int_0^t  e^{-is (\al \cdot \xi)^3}  \ft{q}_{\xi^{(2)}}(s)  \ft{q}_{\xi^{(3)}}(s)   \ft{q}_{\xi^{(4)}}(s) \, ds \bigg|\\
& \les \frac{1}{\jb{\Phi_{234}}^\frac12} \int \frac{\delta\bigl( (\al\cdot \xi)^3-\tau_2 -\tau_3 -\tau_4\bigr)}{\jb{\tau_2 - (\al\cdot \xi^{(2)})^3}^{b} \jb{\tau_4 - (\al \cdot \xi^{(4)})^3}^{\frac12}} \\
& \phantom{xxxxxx}\times \jb{\tau_2 - (\al\cdot \xi^{(2)})^3}^{b} \big|\F_t\big(\chi_{[0,t]}  \ft{q}_{\xi^{(2)}}  \big) (\tau_2) \big| \, \prod_{j=3}^4 \jb{\tau_j - (\al\cdot \xi^{(j)})^3}^\frac12 
\big| \F_t \ft{q}_{\xi^{(j)}}(\tau_j) \big| \, d\vec{\tau} \\
& \les \frac{1}{\jb{\Phi_{234}}^\frac12} \bigg[ \int_{\R^2} \frac{1}{\jb{(\al\cdot \xi)^3 - \tau_3 - \tau_4 - (\al\cdot \xi^{(2)})^3}^{2b} \jb{\tau_4 - (\al \cdot \xi^{(4)})^3}} \\
& \phantom{xxxxxx}\times  \big| \jb{\tau_3 - (\al\cdot \xi^{(3)})^3}^\frac12 
 \F_t \ft{q}_{\xi^{(3)}}(\tau_3) \big|^2  d\tau_3 \,d\tau_4\bigg]^\frac12 \\
 & \phantom{xxxxxx} \times \big\| \jb{\tau - (\al\cdot \xi^{(2)})^3}^{b} \F_t\big(\chi_{[0,t]}  \ft{q}_{\xi^{(2)}}  \big) (\tau) \big\|_{L^2_\tau} \big\| \jb{\tau - (\al\cdot \xi^{(4)})^3}^\frac12 \F_t\ft{q}_{\xi^{(4)}} (\tau) \big\|_{L^2_\tau} \\
 & \les \frac{1}{\jb{\Phi_{234}}^\frac12} \prod_{j=2}^4 \big\| \jb{\tau - (\al\cdot \xi^{(j)})^3}^\frac12 \F_t\ft{q}_{\xi^{(j)}} (\tau) \big\|_{L^2_\tau}.
\end{align*}
This yields
\begin{align*}
\big\| \mathcal{N}_1 (t) \big\|_{\l^1_\xi} & \les  \sum_\xi \sum_{\mathcal R} \sum_{\xi^{(3)} + \xi^{(4)}=\xi^{(1)}}  \frac{1}{|\al \cdot \xi^{(2)}| \jb{\Phi_{234}}^\frac12} \prod_{j=2}^4 \big\| \jb{\tau - (\al\cdot \xi^{(j)})^3}^\frac12 \F_t\ft{q}_{\xi^{(j)}} (\tau) \big\|_{L^2_\tau}\\
& \les D^\frac12 \|q \|^3_{X^{\theta, \frac12}}
\end{align*}
where
\begin{align*}
D &:= \sum_\xi \sum_{\mathcal R} \sum_{\xi^{(3)} + \xi^{(4)}=\xi^{(1)}}  \frac{|\al\cdot \xi^{(3)}| |\al \cdot \xi^{(4)}| }{|\al\cdot \xi^{(2)}| \jb{\Phi_{234}} \jb{\xi_1^{(2)}}^{2\theta} \jb{\xi_2^{(2)}}^{2\theta} \jb{\xi_1^{(3)}}^{2\theta} \jb{\xi_2^{(3)}}^{2\theta} \jb{\xi_1^{(4)}}^{2\theta} \jb{\xi_2^{(4)}}^{2\theta} }.
\end{align*}

To complete the proof, it remains to show that $D\lesssim 1$. By symmetry, we merely need to estimate the part of the sum where
\begin{align}\label{sym ass'}
|\al\cdot \xi^{(3)}| \geq |\al\cdot \xi^{(4)}|.
\end{align}
We decompose into further regions depending on the size of $|\al\cdot \xi^{(2)}|$.

\medskip 

\noi\textbf{Case 2.1: $ |\al\cdot \xi^{(2)} | \ges |\al \cdot \xi^{(4)}|$} and $\max\limits_{j=1,2} |\xi_j^{(3)}| \les \min\limits_{j=1,2}|\xi_j^{(3)}| + |\xi_1^{(2)}| + |\xi_2^{(2)}| +|\xi_1^{(4)}| + |\xi_2^{(4)}|$.  
In this case, we do not need to exploit the factor $\jb{\Phi_{234}}$ appearing in the denominator.  Indeed, the contribution of this part of the sum to $D$ is bounded by a constant multiple of
\begin{align*}
 &\sum_{\xi^{(2)}, \xi^{(3)}, \xi^{(4)}} \frac{|\al\cdot \xi^{(3)}|}{\jb{\xi_1^{(2)}}^{2\theta} \jb{\xi_2^{(2)}}^{2\theta} \jb{\xi_1^{(3)}}^{2\theta} \jb{\xi_2^{(3)}}^{2\theta} \jb{\xi_1^{(4)}}^{2\theta} \jb{\xi_2^{(4)}}^{2\theta} } \\
&\quad \les \sum_{\xi^{(2)}, \xi^{(3)}, \xi^{(4)}} \frac{1}{\jb{\xi_1^{(2)}}^{2\theta-\frac12} \jb{\xi_2^{(2)}}^{2\theta-\frac12} \jb{\xi_1^{(3)}}^{2\theta-\frac12} \jb{\xi_2^{(3)}}^{2\theta-\frac12} \jb{\xi_1^{(4)}}^{2\theta-\frac12} \jb{\xi_2^{(4)}}^{2\theta-\frac12} } \les 1,
\end{align*}
provided $2\theta-\frac12 > 1$, or equivalently, $\theta> \frac34$. 

\medskip 

\noi\textbf{Case 2.2: $ |\al\cdot \xi^{(2)} | \ges |\al \cdot \xi^{(4)}|$} and $\max\limits_{j=1,2} |\xi_j^{(3)}| \gg \min\limits_{j=1,2}|\xi_j^{(3)}|+  |\xi_1^{(2)}|+  |\xi_2^{(2)}|+|\xi_1^{(4)}|+  |\xi_2^{(4)}|$.
\noi Recalling \eqref{sym ass}, we see that $|\xi_1^{(3)}|\gg|\xi_2^{(3)}|+ |\xi^{(2)}|+ |\xi^{(4)}|$ and
\begin{align*}
|\Phi_{234}| & \sim |\al\cdot (\xi^{(2)} + \xi^{(3)})| |\al\cdot (\xi^{(2)} + \xi^{(4)})| |\al\cdot (\xi^{(3)} + \xi^{(4)})| \\
&\sim |\al \cdot \xi^{(3)} |^2 | \al\cdot(\xi^{(2)} + \xi^{(4)})|. 
\end{align*}
Using also the diophantine condition \eqref{diophantine}, we may bound the contribution of this part of the sum to $D$ by a constant multiple of
\begin{align*}
\sum_{\xi^{(2)}, \xi^{(3)}, \xi^{(4)}} & \frac{1 }{ |\al\cdot \xi^{(3)}| |\al\cdot (\xi^{(2)} + \xi^{(4)})| \prod\limits_{j=2}^4 \jb{\xi_1^{(j)}}^{2\theta} \jb{\xi_2^{(j)}}^{2\theta}  } \\
& \les \sum_{\xi^{(2)}, \xi^{(3)}, \xi^{(4)}}  \frac{|\xi^{(2)} + \xi^{(4)}|^\gamma}{ \jb{\xi_1^{(3)}}^{2\theta+1} \jb{\xi_2^{(3)}}^{2\theta} \jb{\xi_1^{(2)}}^{2\theta} \jb{\xi_2^{(2)}}^{2\theta} \jb{\xi_1^{(4)}}^{2\theta} \jb{\xi_2^{(4)}}^{2\theta}  }\\
& \les \sum_{\xi^{(2)}, \xi^{(3)}, \xi^{(4)}}\frac{1}{ \jb{\xi_1^{(3)}}^{2\theta+1-\gamma} \jb{\xi_2^{(3)}}^{2\theta} \jb{\xi_1^{(2)}}^{2\theta} \jb{\xi_2^{(2)}}^{2\theta} \jb{\xi_1^{(4)}}^{2\theta} \jb{\xi_2^{(4)}}^{2\theta} }\lesssim 1,
\end{align*}
provided $2\theta + 1 - \gamma>1$, or equivalently, $\theta > \frac\gamma2$.

\medskip 

\noi{\bf Case 2.3: $|\al \cdot \xi^{(2)} | \ll |\al \cdot \xi^{(4)}|$}. Recalling \eqref{sym ass'}, on this region we have
\begin{align*}
|\Phi_{234}| \sim |\al\cdot \xi^{(3)}| |\al\cdot \xi^{(4)}| | \al \cdot (\xi^{(3)}+\xi^{(4)}) |.
\end{align*}
As we are working in the region $\mathcal R$, we have $| \al \cdot (\xi^{(3)}+\xi^{(4)}) | =| \al \cdot \xi^{(1)} |\gg |\xi^{(2)} |\gtrsim \jb{\xi_1^{(2)}}^\frac12\jb{\xi_2^{(2)}}^\frac12$. Using also \eqref{diophantine}, we may bound the contribution of this part of the sum to $D$ by a constant multiple~of
\begin{align*}
\sum_\xi \sum_{\mathcal R} &\sum_{\xi^{(3)} + \xi^{(4)}=\xi^{(1)}}  \frac{1}{|\al\cdot \xi^{(2)}|  | \al \cdot (\xi^{(3)}+\xi^{(4)}) | \prod\limits_{j=2}^4 \jb{\xi_1^{(j)}}^{2\theta} \jb{\xi_2^{(j)}}^{2\theta} }\\
& \les \sum_{\xi^{(2)}, \xi^{(3)}, \xi^{(4)}}  \frac{1}{|\xi^{(2)}|^{1-\gamma} \prod\limits_{j=2}^4 \jb{\xi_1^{(j)}}^{2\theta} \jb{\xi_2^{(j)}}^{2\theta} }\\
& \les \sum_{\xi^{(2)}, \xi^{(3)}, \xi^{(4)}}  \frac{1}{\jb{\xi_1^{(2)}}^{2\theta+\frac12-\frac\gamma2}\jb{\xi_2^{(2)}}^{2\theta+\frac12-\frac\gamma2} \prod\limits_{j=3}^4 \jb{\xi_1^{(j)}}^{2\theta} \jb{\xi_2^{(j)}}^{2\theta} } \les 1,
\end{align*}
provided $2\theta + \frac12 - \frac\gamma2>1$, which is implied by $\theta > \frac\gamma2$ as $\gamma>1$.

This completes the treatment of $\mathcal N_1$ and so the proof of the proposition.
\end{proof}

\section{A counter-example to the Deift conjecture}

In this section we prove Theorem~\ref{th:deift}. We first recall the following special case of results due to Oskolkov \cite{oskolkov} on trigonometric sums with polynomial phases:
 
\begin{theorem}[{\cite[Proposition~12]{oskolkov}}]\label{th:airy}
Fix $\alpha>0$ and let $f$ be a periodic function with period $2\pi \al^{-1}$ that is of bounded total variation over the period. Then the solution 
$$
w(t,x)= \sum_{\xi\in\Z} \ft{f}(\xi) e^{i(\al \xi x + \al^3 \xi^3 t)}
$$
to the Airy equation \eqref{Airy} with initial data $f$ has the following properties:\\[1mm]
{\rm(i)} $w \in L^\infty_{t,x}$ and $\|w\|_{L^\infty_{t,x}}\les \| f\|_{L^\infty} + \operatorname{var}(f)$; \\[1mm]
{\rm(ii)} The set of discontinuities of $w$ in $(t,x) \in \R^2$ is countable;\\[1mm]
{\rm(iii)} For each $t$ such that $\frac{\al t }{2\pi}$ is irrational, $w(t)$ is a continuous function of $x$.
\end{theorem}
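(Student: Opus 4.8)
The plan is to reduce all three assertions to a single universal exponential sum and then to bring in the classical theory of cubic Weyl sums together with Diophantine approximation of the time variable. First I would rescale: writing $y=\alpha x$ and $s=\alpha^3 t$ turns $w$ into the standard $2\pi$-periodic Airy evolution $W(s,y)=\sum_{\xi\in\Z}\ft f(\xi)\,e^{i(\xi y+\xi^3 s)}$, and the continuity statement concerns those $s$ with $s/2\pi$ irrational. Since $f$ has bounded variation, integrating the Fourier coefficients by parts against the Stieltjes measure $df$ gives $\ft f(\xi)=\tfrac{1}{2\pi i\xi}\int_\T e^{-i\xi y'}\,df(y')$ for $\xi\neq0$, so that, at least formally,
\[
W(s,y)=\ft f(0)+\frac{1}{2\pi}\int_\T \Phi_s(y-y')\,df(y'),\qquad \Phi_s(z):=\sum_{\xi\neq0}\frac{e^{i(\xi z+\xi^3 s)}}{i\xi}.
\]
The function $\Phi_s$ is precisely the Airy evolution of the $2\pi$-periodic sawtooth, and every claim about $W$ is governed by it: (i) follows from $\sup_{s,z}|\Phi_s(z)|\les 1$ (using $|\ft f(0)|\le\|f\|_{L^\infty}$ and $|df|(\T)=\operatorname{var}(f)$), while the spatial continuity in (iii) follows once the truncations of $\Phi_s$ converge to it uniformly in $z$.

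The heart of the matter is a bound on the symmetric partial sums $\Phi_s^N(z)=\sum_{0<|\xi|\le N}e^{i(\xi z+\xi^3 s)}/(i\xi)$ that is uniform in $N$, $z$, \emph{and} $s$. By Abel summation this is equivalent to controlling the cubic Weyl sums $A_N(s,z)=\sum_{\xi=1}^N e^{i(\xi z+\xi^3 s)}$: one needs boundedness of $A_N/N$ together with summability of $|A_N|/N^2$, uniformly in $z$ and $s$. Here I would invoke the standard estimate from one step of Weyl differencing, which converts the cubic phase into a quadratic Gauss sum: if $|s/2\pi-a/q|\le q^{-2}$ with $(a,q)=1$, then $|A_N(s,z)|\les Nq^{-1/2}+(Nq)^{1/2}+\sqrt q$ up to logarithmic factors, uniformly in $z$. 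Dirichlet's approximation theorem supplies such a denominator $q$ at every dyadic scale $N$, and inserting this into the Abel-summed series, organized dyadically, yields $\sup_{s,z,N}|\Phi_s^N(z)|\les1$; a small but essential point is that the potential logarithmically divergent contributions (from the residues where the phase is genuinely periodic) cancel via the symmetry $\xi\leftrightarrow-\xi$, which also legitimizes the interchange leading to the displayed identity and hence gives (i). \textbf{This uniform-in-$s$ Weyl bound, valid even at rational $s$, is the main obstacle}; everything else is comparatively soft.

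For (iii) I would exploit that when $s/2\pi$ is irrational its continued-fraction convergents $a_n/q_n$ satisfy $q_n\to\infty$. Choosing for each large $N$ an approximating denominator $q=q(N)\to\infty$, the Weyl estimate forces the tail $\sum_{|\xi|>N} e^{i(\xi z+\xi^3 s)}/(i\xi)$ to tend to $0$ \emph{uniformly in $z$}. Thus $\Phi_s^N\to\Phi_s$ uniformly; as each $\Phi_s^N$ is a trigonometric polynomial, the limit $\Phi_s$ is continuous, and then $W(s,\cdot)=\ft f(0)+\tfrac{1}{2\pi}\,\Phi_s*df$ is continuous, being the convolution of a continuous bounded function with the finite measure $df$. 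The delicate part is upgrading the pointwise Weyl decay to the stated uniformity in $z$, which is exactly where Oskolkov's fine estimates are indispensable.

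Finally, for (ii) I would strengthen the argument of (iii) to joint continuity: the uniform tail bound can be made locally uniform in $s$ near any irrational $s_0/2\pi$, so $\Phi$, and hence $W$, is jointly continuous off the rational-time slices $\{s/2\pi\in\Q\}$. On each such slice $s=2\pi a/q$ the revival (Talbot) phenomenon shows that $\Phi_s$ is a finite combination of translated sawtooths, so $W(s,\cdot)$ is piecewise linear with only finitely many jumps, and away from those finitely many spatial points $W$ agrees with its continuous extension from neighbouring irrational times. Hence the discontinuities of $W$ on $\R^2$ lie in the union, over the countably many rational slices, of finitely many points each, and this set is countable. Undoing the rescaling returns the three assertions for $w$.
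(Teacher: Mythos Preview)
The paper does not supply a proof of this theorem; it is quoted verbatim from Oskolkov and used as a black box. So there is no ``paper's own proof'' to compare against. That said, your sketch is essentially an outline of Oskolkov's argument: the reduction via $\ft f(\xi)=\tfrac{1}{2\pi i\xi}\int e^{-i\xi y'}\,df(y')$ to the universal kernel $\Phi_s$, the Abel summation passage to the cubic Weyl sums $A_N$, and the rational/irrational dichotomy through Diophantine approximation are precisely the ingredients of his treatment.

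Two technical points deserve correction. First, the bound you quote, $|A_N|\les Nq^{-1/2}+(Nq)^{1/2}+\sqrt{q}$, is the shape of the estimate for \emph{quadratic} exponential sums, not cubic ones. One step of Weyl differencing turns the cubic sum into a sum over $h$ of quadratic Gauss sums with leading coefficient $3h s$, whose Diophantine type varies with $h$; summing these does not produce the clean $q^{-1/2}$ gain. The classical Weyl inequality for degree three gives instead $|A_N|\les N^{1+\eps}(q^{-1}+N^{-1}+qN^{-3})^{1/4}$, and Oskolkov's sharper uniform bounds (the ``Vinogradov series'' of his title) are what is actually needed for the full uniformity in $s$ you flag as the main obstacle. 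Your instinct that this is the crux is correct; the specific estimate is not.

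Second, in part (ii) you assert that on each rational-time slice $W(s,\cdot)$ has only finitely many jumps. That is true only when $f$ itself is continuous. For general $f\in BV$, the convolution $\Phi_s*df$ inherits a jump at every point of the form $z_j+y_k$, where the $z_j$ are the finitely many jumps of $\Phi_s$ and the $y_k$ are the (at most countably many) atoms of $df$. The set of discontinuities on each rational slice is therefore countable, not finite; the conclusion survives because a countable union of countable sets is countable.
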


It follows from Theorem~\ref{th:airy} that the Deift Conjecture fails for the Airy equation.  Indeed, fix $\al=(\al_1, \al_2)\in \R^2$ satisfying \eqref{6:59} and let $f(x) = f_1(x) + f_2(x)$ where $f_1(x)= \operatorname{sq}(\al_1x)$ and $f_2(x)= \operatorname{sq}(\al_2x)$, which are both periodic functions of bounded total variation.  The solution to the Airy equation \eqref{Airy} with initial data $f$ can be written as 
\begin{align*}
w(t,x) = \sum_{\xi\in \Z} \ft{f}_1(\xi) e^{i(\al_1 \xi x + \al_1^3 \xi^3 t) } + \sum_{\xi\in \Z} \ft{f}_2(\xi) e^{i(\al_2 \xi x + \al_2^3 \xi^3 t) }.
\end{align*} 
By Theorem~\ref{th:airy}, $w$ is a bounded function of spacetime; moreover, $w(t_0,x)$ is a continuous function of $x$ for every time $t_0$ such that $\frac{\al_1 t_0}{2\pi}$ and $\frac{\al_2 t_0}{2 \pi}$ are both irrational. For such times $t_0$, $w(t_0)$ is the sum of two continuous periodic functions and so is almost periodic. However, the initial data $w(0)=f$ is not almost periodic: it is not continuous and it does not have dense almost periods.

Building on this observation and the analysis in the previous section, we are now ready to prove that the Deift Conjecture also fails for the Korteweg--de Vries equation.

\begin{proof}[Proof of Theorem~\ref{th:deift}]\
Fix $\al = (\al_1,\al_2) \in\R^2$ satisfying the Diophantine condition \eqref{diophantine} for some fixed $C_0>0$ and $\gamma >1$, and let
$$
f(x) = \operatorname{sq}(\al_1x) + \operatorname{sq}(\al_2x).
$$
Clearly, $f$ is bounded and of bounded total variation.  As remarked above, $f$ is not almost periodic: it is not continuous and it does not have dense almost periods.

By Theorem~\ref{th:lwp}, there exist $T>0$ and a solution $u$ of KdV on the interval $(-T,T)$ with initial data $f$.  According to Theorem~\ref{th:airy}(i), the solution $w(t)=e^{-t\partial_x^3}f $ to the Airy equation with initial data $f$ is bounded globally in spacetime.  On the other hand, Theorem~\ref{prop:nonlinear-smoothing} shows that the nonlinear part $u(t,x) - w(t,x)$ is bounded on $(-T,T)\times\R$.  Thus $u(t,x)$ is bounded throughout $(-T,T)\times\R$.  

By Theorem~\ref{th:lwp}, $u:(-T,T)\to G^\theta$ is continuous and so $t\mapsto\langle \varphi, u(t)\rangle$ is continuous on $(-T,T)$ for any Schwartz function $\varphi$.  Combining this with the fact that $u(t,x)$ is bounded on $(-T,T)\times\R$, we see that $u:(-T,T)\to L^\infty(\R)$ is weak-$\ast$ continuous.

In view of Theorem~\ref{th:unique}, we therefore deduce that $u$ is the \emph{unique} solution of KdV with initial data $f$ consistent with Definition~\ref{def:solution}.
 
By Theorem~\ref{prop:nonlinear-smoothing}, the nonlinear part $u(t) - w(t)$ is a continuous almost periodic function of space for every $t \in (-T,T)$.  Moreover, Theorem~\ref{th:airy}(iii) guarantees that $w(t_0)$ is also a continuous almost periodic function of space for any $t_0 \in (-T,T)$ for which $\frac{\al_1 t_0}{2\pi}$ and $\frac{\al_2 t_0}{2\pi}$ are both irrational.  For such $t_0$ we conclude that $u(t_0)$ is a continuous almost periodic function of space. 

Fixing such $t_0$ that is negative, we choose $q_0 := u(t_0)$ as our almost periodic initial data. The unique solution with this initial data is, of course, $q: t\mapsto u(t+t_0)$. In this way, we see that the KdV evolution carries the initial data $q_0$ that is almost periodic to the state $q(|t_0|)= f$ that is \emph{not} almost periodic. 
\end{proof}


\section{The case against Stepanov almost periodicity}\label{sec:stepanov}

In the previous section, we considered a solution of \eqref{kdv} with almost periodic initial data, whose evolution developed discontinuities and so left the class of almost periodic functions.  Although this breakdown of almost periodicity was very mild, it allowed us to give a rather precise description of the evolution; it also allowed us to guarantee the uniqueness of our solution.

Given the mild nature of this breakdown, it is tempting to believe that a wider notion of almost periodicity, for example, Stepanov-almost periodicity, may change the answer.  Given $1\leq p< \infty$, the corresponding class is defined as the closure of the set of trigonometric polynomials under the norm
$$
\| f \|_{S^p}^p = \sup_y \int_{-1}^{1} \bigl| f(x+y) \bigr|^p \, dx.
$$
For $p=\infty$, we recover the original notion of an almost periodic function.

The goal of this section is to argue that such remedies are illusory; they do not even resolve the case of the Airy evolution
\begin{equation}\label{Airy'}
\tfrac{d}{dt} q = - q'''  .
\end{equation}
Concretely, we will give an example of almost periodic initial data (in the sense of Bohr and so also Stepanov), whose evolution under the Airy flow undergoes an infinite concentration of $L^2$ norm in finite time.

In the example below, the wave packets that come together are well-separated in frequency.  For this reason, we can expect their evolutions to interact only very weakly under the full KdV evolution.  Indeed, the frequency and spatial separations of the wave packets can be increased tremendously, without compromising the analysis below in any way.   However, given that the entire point of this section is to demonstrate what we regard as a fatally flawed direction for further investigation, we refrain from analyzing the full KdV evolution. 

\begin{proposition}
There is a bounded almost periodic function $u_0:\R\to\R$ whose evolution through time $t_0$ under \eqref{Airy'} satisfies
\begin{align}\label{infinite L2'}
\int_\R | u(t_0,x) |^2 e^{-x^2} \,dx = \infty.
\end{align}
\end{proposition}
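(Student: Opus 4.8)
The plan is to build $u_0$ as a superposition $u_0=\sum_{n\ge1}\tilde u^{(n)}_0$ of spatially periodic wave trains (with rapidly growing periods), each of which is a \emph{pre-dispersed} (i.e.\ time-reversed) wave packet tuned so that at the single time $t_0$ it refocuses into a sharp bump sitting near the origin. The tension to resolve is that almost periodicity forces $u_0$ to be bounded and recurrent, whereas \eqref{infinite L2'} demands that infinitely much $L^2$ mass be deposited into a bounded region at time $t_0$. Since $e^{-t\dx^3}$ conserves the $L^2$ mass per period, the mechanism must be that each wave train carries an $O(1)$ amount of $L^2$ mass which, although spread thinly (and so contributing only a little to $\|u_0\|_{L^\infty}$) at time $0$, is concentrated near the origin at time $t_0$; summing infinitely many such trains produces the divergence.

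Concretely, fix a real Schwartz bump $\beta$ with $\|\beta\|_{L^2}=1$, choose widths $w_n=n^{-4}$ and distinct centers $x_n\in[\tfrac12,1]$ with $|x_n-x_m|\gg w_n+w_m$, and set $B_n(x)=w_n^{-1/2}\beta\bigl((x-x_n)/w_n\bigr)$, so that $\|B_n\|_{L^2}=1$ while $\|B_n\|_{L^1}\les w_n^{1/2}$. Define the building block by pre-dispersing, $u^{(n)}_0:=e^{t_0\dx^3}B_n$, which lies in $\mathcal S(\R)$ and, by the standard Airy dispersive estimate $\|e^{t\dx^3}f\|_{L^\infty}\les |t|^{-1/3}\|f\|_{L^1}$, obeys $\|u^{(n)}_0\|_{L^\infty}\les t_0^{-1/3}w_n^{1/2}\sim n^{-2}$. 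Periodizing on a very large period $P_n$ (chosen to dwarf the $O(t_0 w_n^{-2})$ spatial spread of $u^{(n)}_0$), we obtain a smooth $P_n$-periodic, hence almost periodic, function $\tilde u^{(n)}_0$ with $\|\tilde u^{(n)}_0\|_{L^\infty}\sim n^{-2}$. Since $\sum_n n^{-2}<\infty$, the series $u_0=\sum_n\tilde u^{(n)}_0$ converges uniformly; as a uniform limit of almost periodic functions it is bounded, real, and almost periodic.

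Because $e^{-t\dx^3}$ commutes with spatial translation, the evolution of each block is the periodization of $e^{-t\dx^3}u^{(n)}_0$, and at the focal time $e^{-t_0\dx^3}u^{(n)}_0=B_n$ exactly; thus $u(t_0)=\sum_n\sum_{k\in\Z}B_n(\,\cdot-kP_n)$. Near the origin only the $k=0$ copies survive, and these are the unit-mass bumps $B_n$ supported in disjoint intervals about the $x_n$. Consequently $\int_\R|u(t_0,x)|^2e^{-x^2}\,dx\ges\sum_n\int|B_n(x)|^2e^{-x^2}\,dx\sim\sum_n e^{-x_n^2}\ges\sum_n e^{-1}=\infty$, which is \eqref{infinite L2'}. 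One checks along the way that $u(t_0)\in L^1_{\mathrm{loc}}$ (indeed $\int_{[1/2,1]}|u(t_0)|\les\sum_n w_n^{1/2}<\infty$) and is defined at a.e.\ $x$, so that it is a legitimate function agreeing with the weak-$*$/distributional Airy evolution of $u_0$ defined by duality, $\langle u(t),\varphi\rangle=\langle u_0,e^{t\dx^3}\varphi\rangle$.

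I expect the main difficulty to lie not in any single estimate but in making the many scales cohere: one must choose $w_n$, the centers $x_n$, and the periods $P_n$ so that simultaneously (a) the dispersive $L^\infty$ decay is strong enough, uniformly in $n$, for $\sum_n\|\tilde u^{(n)}_0\|_{L^\infty}$ to converge; (b) the periodization tails and the rapidly decaying Schwartz tails of distinct blocks do not accumulate, so that the refocused bumps at $t_0$ remain essentially disjoint and non-interfering and $u(t_0)$ is a.e.\ finite; and (c) the only surviving contribution near the origin is the intended pile-up of unit-mass bumps. The conceptual crux is the realization that the $L^1\to L^\infty$ dispersion of the Airy group lets a tall, thin (large-$L^\infty$, unit-$L^2$) profile be encoded in an almost periodic datum of arbitrarily small amplitude, which is precisely what decouples boundedness of $u_0$ from the unbounded concentration at time $t_0$.
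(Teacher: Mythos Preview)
Your proposal is correct and follows the same underlying mechanism as the paper's proof: build $u_0$ as a uniformly convergent sum of periodic functions, each of which is a pre-dispersed packet that refocuses near the origin at time $t_0$ with unit $L^2$ mass, so that the masses pile up. The implementations, however, are genuinely different. The paper works with explicit Gaussian wave packets $\phi(\,\cdot\,;\eta)$ centered at the high frequencies $\eta=\pm 2^n$, periodized with periods $2^n$; the smallness of $\|\tilde u_0^{(n)}\|_{L^\infty}$ is read off from the explicit formula \eqref{0529}--\eqref{0531}, and at time $t_0$ all packets sit at the spatial origin but are separated in frequency, so the divergence/convergence dichotomy is obtained via the inner-product estimates \eqref{0532}--\eqref{0533}. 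You instead place narrow physical-space bumps $B_n$ at distinct centers $x_n\in[1/2,1]$, obtain the $L^\infty$ smallness of $u_0^{(n)}=e^{t_0\partial_x^3}B_n$ from the $L^1\to L^\infty$ dispersive estimate, and avoid any cross-term analysis at time $t_0$ by making the refocused bumps spatially disjoint. Your route is more elementary (no Gaussian calculus) and more flexible in the choice of periods $P_n$; the paper's route yields a limit-periodic $u_0$ with a cleanly described Fourier structure and avoids the bookkeeping of fitting infinitely many disjoint intervals into $[1/2,1]$. One small point to nail down in your write-up: the disjointness argument is cleanest if you take $\beta\in C_c^\infty$ rather than merely Schwartz, and you should check that $\sum_n w_n$ is small enough (after rescaling) to actually pack the centers $x_n$ with the claimed separation.
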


\begin{proof}
Let us fix $t_0= 10^{-6}$ once and for all.  For each frequency parameter $\eta\in \R$, we consider the complex-valued solution $\phi(t,x; \eta)$ to the Airy equation \eqref{Airy'} with initial data
\begin{align}
\widehat \phi(0,\xi; \eta) = \exp\bigl\{ - i t_0 \eta^3 -3it_0\eta^2(\xi-\eta) - 3it_0\eta(\xi-\eta)^2 - \tfrac12 (\xi-\eta)^2\bigr\} .
\end{align}
Evidently,
\begin{align}\label{0529}
\phi(0,x; \eta) = (1 + 6it_0\eta)^{-\frac12} \exp\bigl\{ - i t_0\eta^3 + i\eta x  -\tfrac12(1+6it_0\eta)^{-1}[x-3t_0\eta^2]^2 \bigr\}
\end{align}
and
\begin{align}\label{0530}
\widehat \phi(t_0,\xi; \eta) = \exp\bigl\{ i t_0 (\xi-\eta)^3 -\tfrac12 (\xi-\eta)^2 \bigr\} .
\end{align}

It is clear that the functions presented in \eqref{0529} and \eqref{0530} are Schwartz; however, we need additional quantitative information in order to assemble our initial data $u_0$. Direct computation shows that
\begin{align}\label{0531}
\bigl[ 1 + \tfrac{(x - 3t_0\eta^2)^2}{1+t_0^2\eta^2} \bigr] \bigl|\phi(0,x; \eta) \bigr|   \lesssim ( 1 + t_0^2\eta^2 )^{-\frac14}
\end{align}
uniformly in $x$ and $\eta$.  Regarding the solutions at time $t_0$, we have
\begin{align}\label{0532}
\bigl\langle \phi(t_0,x; \eta) , e^{-x^2} \phi(t_0,x;\eta)\bigr\rangle_{L^2_x(\R)} \gtrsim 1
\end{align}
and
\begin{align}\label{0533}
\bigl\langle \phi(t_0,x-y'; \eta') , e^{-x^2} \phi(t_0,x-y;\eta)\bigr\rangle_{L^2_x(\R)}\lesssim \langle y \rangle^{-2} \langle y' \rangle^{-2}  \langle\eta-\eta' \rangle^{-2}
\end{align}
uniformly for $\eta,\eta' \in \R$.

The parameter $t_0$ was chosen so small precisely to make the verification of \eqref{0532} easy.  Indeed, setting $t_0=0$ in RHS\eqref{0530} gives
$$
\widehat \psi(\xi; \eta) = \exp\bigl\{ -\tfrac12 (\xi-\eta)^2 \bigr\}, \qtq{which implies} \psi(x; \eta) = \exp\bigl\{ix\eta -\tfrac12x^2 \bigr\} .
$$
For this function we may compute the inner product explicitly:
$$
\bigl\langle  \psi(x; \eta) , e^{-x^2}  \psi(x; \eta)\bigr\rangle_{L^2_x(\R)} =\tfrac12\sqrt{2\pi}.
$$
On the other-hand, by Cauchy--Schwarz and Plancherel we may estimate
\begin{align*}
\Bigl|\bigl\langle \phi(t_0,x; \eta) , \, &e^{-x^2} \phi(t_0,x;\eta)\bigr\rangle_{L^2_x(\R)} -\bigl\langle  \psi(x; \eta) , e^{-x^2}  \psi(x; \eta)\bigr\rangle_{L^2_x(\R)}\Bigr|\\
&\lesssim \bigl\|\phi(t_0,x; \eta)- \psi(x; \eta) \bigr\|_{L^2_x(\R)}\lesssim |t_0| \bigl\|\xi^3 \exp\bigl\{-\tfrac12\xi^2 \bigr\}\bigr\|_{L^2_\xi(\R)}.
\end{align*}
Thus taking $t_0$ small guarantees \eqref{0532}.

The estimate \eqref{0533} is also elementary.  Decay in $y$ and $y'$ is most easily seen in physical variables; indeed, this yields decay at an arbitrary polynomial rate.  Likewise, Gaussian-type decay in $\eta-\eta'$ can be exhibited by analyzing the inner product in Fourier variables.  The result follows by taking a geometric mean of these two estimates.

Let us now define our initial data:
\begin{align}\label{0534}
u_0(x) = \sum_{n \in \mathbb N} \sum_{k\in\Z} \sum_\pm  \phi(0,x - k 2^{n}; \pm 2^n) .
\end{align}
For fixed $n$, we see that the inner two sums yield a smooth real-valued $2^n$-periodic function whose supremum norm is of size $O(2^{-n/2})$; see \eqref{0531}.  In this way, we see that $u_0$ is indeed almost periodic.  In fact, because the periods are commensurate, we see that $u_0$ is a uniform limit of continuous periodic functions; such functions are said to be limit periodic.

We now consider the corresponding solution to \eqref{Airy'} at time $t_0$.  Our goal is to show \eqref{infinite L2'}.
Expanding $u(t_0,x)$ as in \eqref{0534}, we may consider two collections of terms, namely, the diagonal terms
\begin{align*}
\sum_{n \in \mathbb N} \sum_{k\in\Z} \sum_\pm  \bigl\langle \phi(t_0,x - k2^{n}; \pm 2^n) , e^{-x^2} \phi(t_0,x - k 2^{n}; \pm 2^n)\bigr\rangle_{L^2_x(\R)}
\end{align*}
and the off-diagonal terms
\begin{align*}
\sum \bigl\langle \phi(t_0,x-y; \eta) , e^{-x^2} \phi(t_0,x-y';\eta')\bigr\rangle_{L^2_x(\R)},
\end{align*}
where the sum is over all choices
\begin{align*}
\Bigl\{ (y,\eta,y',\eta') :{} & |\eta| \in 2^{ \mathbb N},\ |\eta'| \in 2^{ \mathbb N},\ y=k\eta, \text{ and } y'=k'\eta', \text{ for some }k,k'\in\Z \\
&\text{subject to } \eta\neq\eta' \text{ or } y\neq y' \Bigr\}.
\end{align*}
The estimate \eqref{0533} guarantees the contribution of these off-diagonal terms is absolutely summable. The diagonal contribution clearly diverges by virtue of \eqref{0532}.  This proves \eqref{infinite L2'}.
\end{proof}

\end{document}